\renewcommand{\a}{\mathfrak a}
\newcommand{\BibTeX}{{\scshape Bib}\kern-.08em\TeX}
\newcommand{\T}{\S\kern .15em\relax }
\newcommand{\AMS}{$\mathcal{A}$\kern-.1667em\lower.5ex\hbox
        {$\mathcal{M}$}\kern-.125em$\mathcal{S}$}
\newcommand{\resp}{\textit{resp}.\xspace}
\newcommand{\intersect}{\cdot\ldots\cdot}
\DeclareMathOperator{\proj}{Proj}
\DeclareMathOperator{\spec}{Spec}
\renewcommand{\P}{\mathbb{P}}
\newcommand{\Q}{\mathbb{Q}}
\renewcommand{\O}{\mathcal{O}}
\newcommand{\sm}{\mathfrak{m}}
\newcommand{\f}{\mathbb{F}}
\newcommand{\ndot}{\raisebox{.4ex}{.}}
\numberwithin{equation}{section}
\title{Counting multiplicities in a hypersurface over number fields}
\date{\today}
\author{Hao Wen}
\address{Department of Mathematical Sciences\\Tsinghua University\\
Beijing 100084\\P. R. China}
\email{wen-h10@mails.tsinghua.edu.cn}
\author{Chunhui Liu}
\address{Department of Mathematics\\Faculty of Science\\Kyoto University\\Kyoto 606-8502\\Japan}
\email{chunhui.liu@math.kyoto-u.ac.jp}
\begin{document}
\def\smfbyname{}
\begin{abstract}
We fix a counting function of multiplicities of algebraic points in a projective hypersurface over a number field, and take the sum over all algebraic points of bounded height and fixed degree. An upper bound for the sum with respect to this counting function will be given in terms of the degree of the hypersurface, the dimension of the singular locus, the upper bounds of height, and the degree of the field of definition.
\end{abstract}

\maketitle

\tableofcontents

\section{Introduction}
In this paper, we consider a problem of counting multiplicities in projective schemes. More precisely, let $k$ be a field, and $X$ be a scheme of finite type over $\spec k$, we are interested in the estimate of the sum
\[\sum_{\xi\in S\left(X\left(\overline k\right)\right)}f\left(\mu_\xi\left(X\right)\right),\]
where $S\left(X\left(\overline k\right)\right)$ is a subset of $X(\overline k)$ which satisfies some conditions, and $f(\ndot)$ is a positive function and $\mu_{\xi}(X)$ is the multiplicity of $\xi$ in $X$ defined via the local Hilbert-Samuel function of $X$ at $\xi$ as follows.

We say that $X$ is a \textit{pure dimensional} scheme (or $X$ is of \textit{pure dimension}) if all its irreducible components have the same dimension. Let $X$ be of pure dimension, and $\xi\in X$ be a point. Consider the local ring $\O_{X,\xi}$, whose maximal ideal is $\sm_\xi$ and residue field is $\kappa(\xi)$. The \textit{local Hilbert-Samuel function} of $X$ at $\xi$ is given as
\[H_\xi(m)=\dim_{\kappa(\xi)}\left(\sm_\xi^m/\sm_\xi^{m+1}\right)\]
defined for all $m\in \mathbb{N}^+$. Suppose $\dim(\O_{X,\xi})=t\geqslant1$, then there exists a polynomial $P_\xi(T)$ of degree $t-1$ such that $H_\xi(m)=P_\xi(m)$ when $m$ is large enough. In addition, there exists an integer $\mu_\xi(X)\geqslant1$ such that
\begin{equation}\label{local Hilbert-Samuel}
P_\xi(m)=\mu_\xi(X)\frac{m^{t-1}}{(t-1)!}+o(m^{t-1}).
\end{equation}
We define the integer $\mu_{\xi}(X)$ as the \textit{multiplicity} of the point $\xi$ in $X$. In particular, if the point $\xi$ is regular in $X$, which means that $\O_{X,\xi}$ is a regular local ring, then we have $\mu_\xi(X)=1$.

If we take the counting function $f$ as the constant function $f\equiv1$, then this problem reduces to the classical problem of counting algebraic points on the scheme $X$. There have been many literatures about this problem hitherto. If we take $f$ to be a non-trivial function, and in addition we require $f(1)=0$, then this problem will be a question about the complexity of the singular locus of $X$.

\subsection{Known results}
First we consider the case where $X$ is a reduced plane curve of degree $\delta$. In Exercise 5-22 in page 115 of \cite{Fulton2}, we have
\begin{equation} \label{intro-fulton-multiplicity}
\sum\limits_{\xi\in X} \mu_{\xi}(X) \left(\mu_{\xi}(X)-1\right) \leqslant \delta(\delta-1),
\end{equation}
which is obtained by the B\'ezout's Theorem in the intersection theory. In addition, let $g(X)$ be the genus of $X$, if $X$ is geometrically integral, by Corollary 1 in page 201 of \cite{Fulton2}, we have
\[g(X) \leqslant \dfrac{(\delta-1)(\delta-2)}{2} - \sum_{\xi\in X}\dfrac{\mu_{\xi}(X)\left(\mu_{\xi}(X)-1\right)}{2}.\]
This inequality is deduced from the Riemann-Roch Theorem of plane curves.

More generally, let $X \hookrightarrow \mathbb{P}^n_k$ be a projective hypersurface over an algebraically closed field $k$ of characteristic $0$, whose singular locus is of dimension $0$. Through the method of Lefschetz pencil, a direct corollary of \cite[Corollaire 4.2.1]{Laumon1975} gives the inequality
\[\sum_{\xi\in X} \mu_\xi(X)(\mu_{\xi}(X)-1)^{n-1} \leqslant \delta(\delta-1)^{n-1}.\]
But the condition that the singular locus is of dimension $0$ is too restrictive for a general counting problem. In general, the sum in the left hand side of the above inequality depends on the candidate of the base field $k$.

In \cite[Th\'eor\`eme 5.1]{Liu-multiplicity}, the second author of this paper obtained a result of this type over finite fields. More precisely, let $n\geqslant2$, $\delta\geqslant1$, $s\geqslant0$ be three integers, and $\f_q$ be the finite field with exact $q$ elements. He proved that the estimate
\begin{equation} \label{Liu equality}
\sum_{\xi\in X(\f_q)} \mu_{\xi}(X)(\mu_\xi(X)-1)^{n-s-1}\ll_n\delta^{n-s}\max\{\delta-1,q\}^{s}
\end{equation}
holds uniformly for all reduced hypersurface $X$ of degree $\delta$ of $\mathbb P_K^n$ whose singular locus is of dimension $s$. In the above formula we have used  Vinogradov's symbol $\ll$ in its usual sense: let $\Omega$ and $P$ be two sets, and $\widetilde{\Omega}$ be a subset of $\Omega\times P$. Suppose that $f(x,y)$ and $g(x,y)$ are two real-valued functions defined on $\widetilde{\Omega}$, where $x\in\Omega$ and $y\in P$. Then the expression
\[f(x,y)\ll_{y}g(x,y)\]
means that there exists a non-negative function $C(\ndot)$ on the set $P$ such that
\[|f(x,y)|\leqslant C(y)|g(x,y)|\]
holds for every $(x,y)\in\widetilde{\Omega}$.

Some examples are given in \cite{Liu-multiplicity} to show that the order of $\delta$ and $\max\{\delta-1,q\}$ in \eqref{Liu equality} are both optimal when $q\geqslant\delta-1$. This estimate is obtained by the technique of intersection tree introduced in \cite[\S 2.1]{Liu-multiplicity} via the intersection theory on projective spaces.

\subsection{Principal Result}
In this paper, we consider a sum of the same type as in \eqref{Liu equality} over number fields. More precisely, we take the sum over all the algebraic points, whose fields of definition are of fixed degree over the base field, of bounded height in a hypersurface of a projective space. By the Northcott's property (cf. \cite[Theorem B.2.3]{Hindry}), this is a finite set, hence the sum always makes sense. The principle result (Theorem \ref{main theorem}) is stated as follows:
\begin{theo} \label{intro-main result}
Let $K$ be a number field, $n\geqslant2$ be an integer, and $h(\ndot)$ be the absolute logarithmic height function on $\mathbb P^n_K$. For any closed subscheme $X$ of $\mathbb P_K^n$, any $D\in\mathbb N^+$, and any $B\geqslant1$, let
\[S(X;D,B)=\{\xi\in X(\overline K)|\;[K(\xi):K]=D,\;\exp\left([K(\xi):\Q]h(\xi)\right)\leqslant B\}.\]
Let $\delta$ and $s$ be integers such that $\delta\geqslant 1$ and $s\geqslant 0$. Then the inequality
\begin{eqnarray*}& &\sum_{\xi\in S(X;D,B)} \mu_{\xi}(X)(\mu_{\xi}(X)-1)^{n-s-1}\\
&\leqslant&\sum_{t=0}^s\max_{Z\in\mathcal Z_t}\left\{\frac{\#S(Z;D,B)}{\deg(Z)}\right\}\delta(\delta-1)^{n-s+t-1},\end{eqnarray*}
holds for all reduce hypersurfaces $X$ of degree $\delta$ of $\mathbb P_K^n$ whose singular locus is of dimension $s$, where, for $t\in\{0,\ldots,s\}$, $\mathcal Z_t$ is a set of closed subschemes of $X$ of dimension $s-t$, whose construction will be explained in \S\ref{construction of intersection trees}.
\end{theo}
We keep all the notation in Theorem \ref{intro-main result}. If we want to get an upper bound of the sum
\[\sum\limits_{\xi\in S(X;D,B)} \mu_{\xi}(X)(\mu_{\xi}(X)-1)^{n-s-1}\]
through Theorem \ref{intro-main result} for all $X$ satisfying the above conditions, it is important to understand the term
\[\max\limits_{Z\in\mathcal Z_t}\left\{\frac{\#S(Z;D,B)}{\deg(Z)}\right\},\]
 which originates from the classical problem of counting algebraic points, or of counting rational points for the case of $D=1$.

We have the following corollary of Theorem \ref{intro-main result} for the case of $K=\Q$.
\begin{coro}[Corollary \ref{main corollary}]\label{intro-main corollary}
With all the notation and conditions of Theorem \ref{intro-main result}. Suppose $K=\Q$, and let $S(X;B)=S(X;1,B)$ for simplicity. Then the estimate
\[\sum_{\xi\in S(X;B)} \mu_{\xi}(X)(\mu_{\xi}(X)-1)^{n-s-1}\ll_{n}\delta^{n-s}\max\{B,\delta-1\}^{s+1},\quad B\geqslant 1\]
holds uniformly for all reduce singular hypersurfaces $X$ of degree $\delta$ of $\mathbb P_{\Q}^n$ whose singular locus is of dimension $s$.
\end{coro}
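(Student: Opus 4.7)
The plan is to specialize Theorem \ref{intro-main result} to the case $K=\Q$, $D=1$, which reduces the problem to obtaining a uniform bound on the ratio $\#S(Z;B)/\deg(Z)$ for each $Z\in\mathcal Z_t$. Since every such $Z$ is a closed subscheme of $\P^n_\Q$ of dimension $s-t$, I would invoke a standard counting estimate for rational points of bounded height on a projective subscheme of fixed dimension --- in the spirit of the dimension-growth bounds of Schmidt, Heath-Brown and Broberg --- of the form
\[\#S(Z;B)\ll_n \deg(Z)\cdot B^{s-t+1}.\]
If $Z$ happens to be reducible, this would be applied component by component together with the additivity of the degree. Dividing by $\deg(Z)$ then yields $\#S(Z;B)/\deg(Z)\ll_n B^{s-t+1}$, uniformly in $Z\in\mathcal Z_t$.

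Once this input is in hand, the remainder is a routine algebraic simplification. Substituting into the right-hand side of Theorem \ref{intro-main result} and factoring out $\delta(\delta-1)^{n-s-1}B$ gives
\[\sum_{\xi\in S(X;B)}\mu_\xi(X)(\mu_\xi(X)-1)^{n-s-1}\ll_n \delta(\delta-1)^{n-s-1}B\sum_{t=0}^s B^{s-t}(\delta-1)^t.\]
Each term of the inner sum is at most $\max\{B,\delta-1\}^s$, and since $s\leqslant n-1$ the number of terms $s+1$ is absorbed into the implicit constant depending on $n$. Using the trivial estimates $B\leqslant\max\{B,\delta-1\}$ and $(\delta-1)^{n-s-1}\leqslant\delta^{n-s-1}$, the right-hand side is bounded by $\delta^{n-s}\max\{B,\delta-1\}^{s+1}$, which is precisely the desired inequality.

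The main potential obstacle is ensuring that the rational-point counting estimate is valid uniformly across all subschemes $Z$ produced by the intersection-tree machinery of \S\ref{construction of intersection trees}, so that the implicit constant truly depends only on $n$ and not on the particular hypersurface $X$. Provided the $Z\in\mathcal Z_t$ admit a uniform description (for instance as iterated intersections of $X$ with suitable linear subspaces, whose degrees are controlled by B\'ezout-type bounds), the required counting estimate should follow from the standard dimension-growth techniques, and the algebraic manipulation above completes the proof.
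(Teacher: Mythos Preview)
Your approach is essentially the same as the paper's: specialize Theorem~\ref{intro-main result} to $K=\Q$, $D=1$, bound $\#S(Z;B)/\deg(Z)$ uniformly by $B^{\dim(Z)+1}$, then carry out the routine algebraic simplification. The paper establishes the needed input as Theorem~\ref{refined Schanuel estimate for rational points} (proved via an elementary affine-cone and hyperplane-slicing argument, not via Heath--Brown/Broberg dimension-growth results), and this estimate applies to \emph{any} pure-dimensional closed subscheme of $\P^n_\Q$---so your residual concern about a uniform description of the $Z\in\mathcal Z_t$ is unnecessary (and your parenthetical guess that they are intersections with \emph{linear} subspaces is not correct: by the construction in \S\ref{construction of intersection trees} they are integral components of iterated intersections of $X$ with hypersurfaces defined by partial derivatives of the defining polynomial).
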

Moreover, we can construct some examples (for instance, Example \ref{cylinder}) to show that for all $X$ considered in Theorem \ref{intro-main result}, the exponents of $\delta$ and $\max\{B,\delta-1\}$ in Corollary \ref{intro-main corollary} are both optimal when $B\geqslant\delta-1$. We will also explain (Remark \ref{choice of counting function}) that the consideration in Theorem \ref{intro-main result} is necessary.

\subsection{Principal Tools}
We shall follow the construction of intersection trees introduced in \cite[\S 2.1]{Liu-multiplicity} to control the multiplicities of singular points. We construct a series of intersections over $\mathbb{P}^n_K$, and cut $X$ into several irreducible components. The multiplicity of each irreducible component can be bounded by its multiplicity in the intersection trees. Different from techniques used in \cite{Liu-multiplicity} over a finite field, we work over a number field in this paper, whose cardinality is infinite. Consequently this allows us to work over the original base field directly, and we do not need to take a finite extension of the base field in order to make sure that we can construct useful auxiliary schemes, and then descend it back to the original base field.

Meanwhile, we need to consider the number of rational points and algebraic points of bounded height. Since we require that the constant in the estimate in Corollary \ref{intro-main corollary} only depends on $n$, we need a uniform estimate of the number of algebraic points of bounded height in arithmetic varieties, which has a weak dependance on the degrees of varieties.

This paper is organized as follows: in \S 2, we introduce the technique of intersection tree in \cite{Liu-multiplicity}. In \S 3, we recall some useful results on counting rational points and algebraic points, and we consider a uniform estimate of rational points of bounded height over $\Q$, which is a generalization of \cite[Theorem 1]{Schanuel} and \cite[Theorem 3.1]{Browning-PM277}. In \S 4, we give an upper bound of this multiplicity-counting problem as a function of intersection trees, and we give a uniform upper bound of it via a generalized Schanuel's estimate.

\subsection*{Acknowledgment}
We would like to thank Dr. Yang Cao and Dr. Enlin Yang for some useful suggestions on some technical details in this paper, and we would like to thank the anonymous referees for their useful comments and suggestions. Chunhui Liu is supported by JSPS KAKENHI Grant Number JP17F17730.

\section{Operations over intersection trees}
In this section, we recall the notion of intersection tree in the settings of graph theory and some useful properties of it. These are introduced in \cite{Liu-multiplicity}. We fix a base field $k$ throughout this section.

\subsection{Preliminaries of intersection theory} \label{subsection_intersection_theory}
Let $X$ be a projective scheme and $\xi\in X$. In \eqref{local Hilbert-Samuel}, we have defined the multiplicity of the point $\xi$ in $X$, noted by $\mu_\xi(X)$. In addition, if $M$ is an integral closed subscheme of $X$ whose generic point is $\xi_M$, we define the \textit{multiplicity} of $M$ in $X$ as $\mu_{\xi_M}(X)$, noted by $\mu_M(X)$ for simplicity.

In the following, we will recall some useful notions and properties of the intersection theory. We will follow the strategy of \cite{SerreLocAlg} and \cite{Fulton}.

Let $Y$ be a separated regular $k$-scheme of finite type, $r \geqslant 2$ be an integer, and $X_1,\ldots, X_r$ be pure dimensional closed subschemes of $Y$. We denote by $\mathcal{C}(X_1\intersect X_r)$ the set of irreducible components of the intersection product $X_1\intersect X_r$. Let $X$ be a pure dimensional closed subscheme of $Y$, we denote by $\mathcal{C}(X)$ the set of irreducible components of $X$. If not specially mentioned, each element of $\mathcal{C}(X_1\intersect X_r)$ and $\mathcal C(X)$ is considered to be an integral closed subscheme of $Y$. Let $M\in\mathcal C(X_1\intersect X_r)$, we denote by
\[i(M;X_1\intersect X_r;Y)\]
the \textit{intersection multiplicity} of the intersection product $X_1\intersect X_r$ at $M$, and we refer readers to \cite[Chapter 7 and 8]{Fulton} for its definition.

Let $M\in\mathcal{C}(X_1\intersect X_r)$, with $r\geqslant2$. In general, we have (cf. \cite[Chap. III, Prop. 17]{SerreLocAlg})
\[\dim(M)\geqslant\dim(X_1)+\cdots+\dim(X_r)-(r-1)\dim (Y).\]
If the equality holds and the intersection is not empty, we say that $X_1,\ldots,X_r$ \textit{intersect properly} at $M$ in $Y$, and $M$ is a \textit{proper component} of the intersection product $X_1\intersect X_r$ in $Y$. If $X_1,\ldots,X_r$ intersect properly at all its irreducible components, we say that $X_1,\ldots,X_r$ \textit{intersect properly}.

\subsubsection*{B\'ezout's Theorem}
Let $Y$ be a regular projective $k$-scheme and $\mathscr{L}$ be an ample invertible $\O_Y$-module. If $X$ is a closed subscheme of $Y$, we denote by $\deg_{\mathscr{L}}(X)$ the degree of $X$ with respect to the invertible $\O_Y$-module $\mathscr{L}$, which is defined as $\deg\left(c_1(\mathscr{L})^{\dim(X)}\cap[X]\right)$. If $\mathscr{L}$ is the universal bundle $\O_Y(1)$, we note the degree by $\deg(X)$ for simplicity.

The B\'ezout's Theorem is a description of the complexity of a proper intersection in $\mathbb P^n_k$ in terms of degrees with respect to the universal bundles.
\begin{theo}[B\'ezout's Theorem]\label{bezout}
Let $X_1,\ldots,X_r$ be a family of closed pure dimensional subschemes of $\mathbb P^n_k$, which intersect properly. Then we have
\begin{equation*}
\sum_{Z\in\mathcal C(X_1\intersect X_r)}i(Z;X_1\intersect X_r;\mathbb P^n_k)\deg(Z)=\deg(X_1)\cdots\deg(X_r).
\end{equation*}
\end{theo}
We refer readers to \cite[Proposition 8.4]{Fulton} for more details. See also the equality (1) in page 145 of \cite{Fulton}.

\subsection{Definition of intersection tree}
Let $Y$ be a regular separated $k$-scheme and $\mathscr{L}$ be an ample invertible $\O_Y$-module. Let $\delta\geqslant1$ be an integer. We call a directed rooted tree $\mathscr{T}$ with labelled vertices and weighted edges an \textit{intersection tree of level $\delta$} over $Y$, if it satisfies the following conditions:
\begin{enumerate}
    \item the vertices of $\mathscr T$ are the occurrences of integral closed subschemes of $Y$ (an integral closed subscheme of $Y$ can appear several times in a tree);
    \item each vertex $X$ of $\mathscr T$ is attached with a label, which is a pure dimensional closed subscheme of $Y$ or empty;
    \item a vertex of $\mathscr T$ is a leaf if and only if its label is empty;
    \item if $X$ is a vertex of $\mathscr T$ which is not a leaf, then
    \begin{itemize}
      \item its label $\widetilde{X}$ satisfies the inequality $\deg_{\mathscr{L}}(\widetilde{X})\leqslant \delta$ and the closed subschemes $X$ and  $\widetilde{X}$ intersect properly in $Y$;
      \item the children of $X$ are precisely the irreducible components of the intersection product $X\cdot \widetilde{X}$ in $Y$;
      \item for each child $Z$ of $X$, the edge $\ell$ which links $X$ and $Z$ is attached with a weight $w(\ell)$ which equals the intersection multiplicity $i(Z;X\cdot \widetilde{X}; Y)$.
    \end{itemize}
\end{enumerate}

For every fixed intersection tree $\mathscr T$, we call any of the complete subtrees of $\mathscr T$ an \textit{sub-intersection tree}, which is necessarily an intersection tree.
\subsubsection*{Weight of a vertex}
Let $Y$ be a regular separated scheme over $\spec k$, equipped with an ample invertible sheaf $\mathscr{L}$, and $\mathscr T$ be an intersection tree over $Y$. For each vertex $X$ of $\mathscr T$, we define the \textit{weight} of $X$ as the product of the weights of all edges in the path which links the root of $\mathscr T$ and the vertex $X$, denoted as $w_{\mathscr T}(X)$. If $X$ is the root of an intersection tree, we define $w_{\mathscr T}(X)=1$ for convenience.

\subsubsection*{Weight of an integral closed subscheme}
Let $Z$ be an integral closed subscheme of $Y$. We define the \textit{weight} of $Z$ relative to the tree $\mathscr{T}$ as the sum of the weights of all the occurrences of $Z$ as vertices of $\mathscr{T}$, noted by $W_{\mathscr{T}}(Z)$. If $Z$ does not appear in the tree $\mathscr T$ as a vertex, for convenience the weight $W_{\mathscr{T}}(Z)$ is defined to be $0$. Let $Z$ be a vertex in the intersection tree $\mathscr{T}$. When we write $W_{\mathscr{T}}(Z)$, the symbol $Z$ is considered as an integral closed subscheme of $Y$. In other words, we count all the occurrences of the subscheme $Z$ in the intersection tree $\mathscr{T}$.

\subsubsection*{Example of intersection trees}
We refer the readers to \cite[Exemple 3.2]{Liu-multiplicity} as an example of the notion of intersection tree.
\subsection{Estimate of weights of intersection trees}
In order to estimate the weights in intersections trees, we first introduce the following result.
\begin{theo}[Th\'eor\`eme 3.1, \cite{Liu-multiplicity}]\label{mult in the intersection tree}
Suppose that $k$ is a perfect field. Let $\{X_i\}_{i=1}^r$ be a family of closed pure dimensional subschemes of $\mathbb P^n_k$ which intersect properly in $\mathbb P^n_k$. For each irreducible component $C \in \mathcal C(X_1\intersect X_r)$, let $\mathscr T_C$ be an intersection tree whose root is $C$. We consider a vertex $M$ in the intersection trees $\{\mathscr T_C\}_{C\in\mathcal C(X_1\intersect X_r)}$ which satisfies: for each vertex $Z$ in $\{\mathscr T_C\}_{C\in\mathcal C(X_1\intersect X_r)}$, if $M$ is a proper subscheme of $Z$, then there exists an occurrence of $M$ as a descendant of $Z$. Then we have
\begin{equation}\label{no auxillary scheme}
\sum_{C\in\mathcal C(X_1\intersect X_r)}W_{\mathscr T_C}(M)i(C;X_1\intersect X_r;\mathbb P^n_k)\geqslant \mu_{M}(X_1)\cdots\mu_{M}(X_r),
\end{equation}
where $\mu_M(X_i)$ is the multiplicity of $M$ in $X_i$, defined in \eqref{local Hilbert-Samuel}.
\end{theo}
Keeping all the notation in Theorem \ref{mult in the intersection tree}, we introduce the following notions.
\begin{defi} \label{def of C_s}
Let $s$ be a non-negative integer. We define $\mathcal C_s$ as the set of all vertices of depth $s$ in the intersection trees $\mathscr T_C$, where $C\in\mathcal C(X_1\intersect X_r)$. In addition, we define $\mathcal C_*=\bigcup\limits_{s\geqslant0}\mathcal C_s$.
\end{defi}
We define a subset of $\mathcal C_s$ for each non-negative integer $s$ as below.
\begin{defi} \label{def of Z_s}
Let $s$ be a non-negative integer. We define $\mathcal Z_s$ as the subset of $\mathcal C_s$ of elements $M$ which satisfy the following condition: for every vertex $Z$ of intersection trees $\{\mathscr T_C\}_{C\in\mathcal C(X_1\intersect X_r)}$, if $M$ is a proper subscheme of $Z$, then there exists a descendant of $Z$ which is an occurrence of $M$. In addition, we define $\mathcal Z_*=\bigcup\limits_{s\geqslant0}\mathcal Z_s$.
\end{defi}
By definition, we have $\mathcal Z_0=\mathcal C_0=\mathcal C(X_1\intersect X_r)$. In fact, Theorem \ref{mult in the intersection tree} is satisfied for every element in $\mathcal Z_*$. \begin{defi}
Let $s$ be a non-negative integer. We denote by $\mathcal C'_s$ (\resp $\mathcal Z'_s$, $\mathcal C'_*$ and $\mathcal Z'_*$) the set of the labels of $\mathcal C_s$ (\resp $\mathcal Z_s$, $\mathcal C_*$ and $\mathcal Z_*$).
\end{defi}
The following proposition is a corollary of Theorem \ref{mult in the intersection tree}, which is proved via Theorem \ref{bezout}.
\begin{prop}[Proposition 4.6, \cite{Liu-multiplicity}] \label{grassmanne}
With all the above notation and the conditions in Theorem \ref{mult in the intersection tree}, we suppose that all the non-empty elements in $\mathcal C'_*$ have the same dimension. Then we have
\begin{equation*}
\sum_{Z\in \mathcal Z_s} \left(\prod_{i=1}^r\mu_Z(X_i)\right)\deg(Z) \leqslant \prod_{i=1}^r\deg(X_i)\prod_{j=0}^{s-1}\max_{\widetilde{Z}\in \mathcal C'_j}\{\deg(\widetilde{Z})\},
\end{equation*}
where we define by convention $\prod\limits_{j=0}^{s-1}\max\limits_{\widetilde{Z}\in \mathcal C'_j}\{\deg(\widetilde{Z})\}=1$ if $s=0$.
\end{prop}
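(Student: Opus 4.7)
The plan is to combine Theorem \ref{mult in the intersection tree} with an inductive application of B\'ezout's Theorem \ref{bezout} inside each of the trees $\mathscr T_C$. For any $Z\in\mathcal Z_s$, the defining condition of $\mathcal Z_s$ is precisely the hypothesis required to apply Theorem \ref{mult in the intersection tree}, so that
\[
\prod_{i=1}^r\mu_Z(X_i)\leqslant\sum_{C\in\mathcal C_0}W_{\mathscr T_C}(Z)\,i(C;X_1\intersect X_r;\mathbb P^n_k).
\]
Multiplying through by $\deg(Z)$, summing over $Z\in\mathcal Z_s$, and swapping the two summations reduces the proposition to proving, for each root $C\in\mathcal C_0$, the tree-level bound
\[
\sum_{Z\in\mathcal Z_s}W_{\mathscr T_C}(Z)\deg(Z)\leqslant\deg(C)\prod_{j=0}^{s-1}\max_{\widetilde Z\in\mathcal C'_j}\{\deg(\widetilde Z)\}.
\]
Indeed, granted this, a final application of B\'ezout's Theorem to the family $X_1,\ldots,X_r$ converts $\sum_{C\in\mathcal C_0}i(C;X_1\intersect X_r;\mathbb P^n_k)\deg(C)$ into $\prod_{i=1}^r\deg(X_i)$, which is the right-hand side of the proposition.

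For the tree-level bound I would induct on the depth $s$ inside the fixed tree $\mathscr T_C$. The case $s=0$ is immediate, since the unique vertex of depth $0$ is the root $C$ with weight $1$. For the inductive step, one may enlarge the sum over $\mathcal Z_s$ to a sum over all vertices of depth $s$ in $\mathscr T_C$ at no cost, since every term is non-negative and $\mathcal Z_s\subseteq\mathcal C_s$. Each such vertex $Z$ has a unique parent $Z'$ at depth $s-1$, whose label $\widetilde{Z'}$ lies in $\mathcal C'_{s-1}$, and the path-weight factors as the product of the path-weight to $Z'$ with the edge weight $i(Z;Z'\cdot\widetilde{Z'};\mathbb P^n_k)$. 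Applying B\'ezout's Theorem to the proper intersection $Z'\cdot\widetilde{Z'}$ gives
\[
\sum_{Z\text{ child of }Z'}i(Z;Z'\cdot\widetilde{Z'};\mathbb P^n_k)\deg(Z)=\deg(Z')\deg(\widetilde{Z'})\leqslant\deg(Z')\max_{\widetilde Z\in\mathcal C'_{s-1}}\{\deg(\widetilde Z)\},
\]
and the induction hypothesis applied at depth $s-1$ then closes the induction.

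The main obstacle is careful bookkeeping: an integral closed subscheme may occur several times as a vertex in the same tree, so $W_{\mathscr T_C}(Z)$ is by definition a sum over all its occurrences. I would therefore carry out the induction by summing over \emph{occurrences} of vertices rather than over the underlying integral closed subschemes, and only collapse to a subscheme-level sum at the very end. The hypothesis that all non-empty labels in $\mathcal C'_*$ share a common dimension is what keeps every intersection $Z'\cdot\widetilde{Z'}$ along the trees proper of the expected codimension, so that B\'ezout's Theorem may be invoked cleanly at each level of the induction.
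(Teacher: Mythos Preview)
Your proposal is correct and follows precisely the route the paper indicates: the proposition is stated as a corollary of Theorem~\ref{mult in the intersection tree} proved via B\'ezout's Theorem, and your argument---apply Theorem~\ref{mult in the intersection tree} to each $Z\in\mathcal Z_s$, then bound the resulting occurrence-weighted degree sums in each tree by an inductive B\'ezout computation down the depths---is exactly this. One small correction: the common-dimension hypothesis on labels is not needed for properness of $Z'\cdot\widetilde{Z'}$ (that is already built into the definition of an intersection tree); its actual role is to force every occurrence of a given integral subscheme $Z$ to lie at a single depth, so that $W_{\mathscr T_C}(Z)$ really is captured by your depth-$s$ occurrence sum.
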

\section{Counting algebraic points in arithmetic varieties}

Let $K$ be a number field. In order to describe the arithmetic complexity of the closed points in $\mathbb P^n_K$, we introduce the following height function.
\subsection{Definition of height functions}
Let $K$ be a number field, $\overline K$ be an algebraic closure of $K$, and $M_K$ be the set of all places of $K$. For every element $x\in K$, we define the absolute value $|x|_v=\left|N_{K_v/\Q_v}(x)\right|_v^\frac{1}{[K_v:\Q_v]}$ for each $v\in M_K$, extending the usual absolute values on $\Q_p$ or $\mathbb{R}$. In addition, we define $\|\ndot\|_v=|\ndot|_v^{[K_v:\Q_v]}$ for every $v\in M_K$.
\begin{defi}\label{weil height}
Let $\xi\in\mathbb{P}^n_K(\overline K)$ be a closed point and $K'$ be any field such that $[K':K]<+\infty$ and $\xi\in\mathbb P^n_K(K')$. We write a $K'$-rational homogeneous coordinate of $\xi$ as $[x_0:\cdots:x_n]$. We define the \textit{absolute logarithmic height} of the point $\xi$ as
\[h(\xi)=\sum_{v\in M_{K'}}\frac{1}{[K':\Q]}\log\left(\max_{0\leqslant i\leqslant n}\{\|x_i\|_v\}\right),\]
which is independent of the choice of the projective coordinate by the product formula (cf. \cite[Chap. III, Proposition 1.3]{Neukirch}).
\end{defi}
We can prove that $h(\xi)$ is independent of the choice of the field $K'$ (cf. \cite[Lemma B.2.1]{Hindry}).

If $\xi$ is an algebraic point of $\mathbb P^n_K$ valued in a number field $K'$ containing $K$, we define the \textit{relative multiplicative height} of the point $\xi$ to be
\[H_{K'}(\xi) = \exp\left([K':\Q]h(\xi)\right).\]

When considering the closed points of a subscheme $X$ of $\mathbb P^n_K$ with the immersion $\phi:X\hookrightarrow\mathbb P^n_K$, we define the height of $\xi\in X(\overline K)$ to be
\[h(\xi):=h(\phi(\xi)).\]
We shall use this notation when there is no confusion of the immersion morphism $\phi$.

Let $B \geqslant1$, $D\in\mathbb N^+$, and $X$ be the subscheme of $\mathbb P^n_K$ defined above. We denote by
\begin{equation}\label{S(X;D,B)}
S(X;D,B)=\{\xi\in X(\overline K)|\;[K(\xi):K]=D, H_{K(\xi)}(\xi)\leqslant B\},
\end{equation}
where $K(\xi)$ is the residue field of $\xi$ in $\mathbb P^n_K$. In particular, we denote by
\begin{equation}\label{S(X;B)}
S(X;B)=S(X;1,B)=\{\xi\in X(K)|\;H_K(\xi)\leqslant B\}
\end{equation}
for simplicity. We denote
\begin{equation}\label{N(X;D,B)}
N(X;D,B)=\#S(X;D,B)
\end{equation}
and
\begin{equation}\label{N(X;B)}
N(X;B)=\#S(X;B).
\end{equation}
By the Northcott's property (cf. \cite[Theorem B.2.3]{Hindry}), $N(X;D,B)$ is finite for every $D\in \mathbb N^+$ and every $B\geqslant1$.

For the problem of counting rational points or algebraic points, it is essential to understand the functions $N(X;B)$ and $N(X;D,B)$ in variables $B$ and $D$. There are fruitful results on this topic, and we will introduce some which are useful in the multiplicity-counting problem.

\subsection{Schanuel's estimate}
Let $B \geqslant1$, $D\in\mathbb N^+$ and $X\hookrightarrow\mathbb P^n_K$ be a projective scheme. With all the notation above, it is natural to consider the density of algebraic points via some properties of $N(X;D,B)$ and $N(X;B)$. First we consider the case where $X=\mathbb P^n_K$.

\subsubsection{The density of rational points of projective spaces}
For $N(\mathbb P^n_K;B)$, we have the following asymptotic estimate
\begin{equation}\label{Schanuel estimate}
N(\mathbb P^n_K;B)=\alpha(K,n)B^{n+1}+o(B^{n+1}),\quad B\rightarrow+\infty
\end{equation}
for all $n\in \mathbb Z^+$, where the constant $\alpha(K,n)$ is articulated in the paper of S. Schanuel \cite[Theorem 1]{Schanuel}.

For the case of $K=\Q$. Let $\xi\in\mathbb P^n_{\Q}(\Q)$, we take the primitive projective coordinate of $\xi$ as $[\xi_0:\cdots:\xi_n]$, which means each $\xi_i\in\mathbb Z$ and $\gcd(\xi_0,\ldots,\xi_n)=1$. In this case, we have
\[H_{\Q}(\xi)=\max_{0\leqslant i\leqslant n}\{|\xi_i|\},\]
where $|\ndot|$ is the usual absolute value. In addition, we have
\begin{equation}\label{Schanuel over Q}
N(\mathbb P^n_{\Q};B)=\frac{2^n}{\zeta(n+1)}B^{n+1}+o(B^{n+1}),\quad B\rightarrow+\infty
\end{equation}
for all $n\in \mathbb N^+$, where $\zeta(n)$ is the usual Riemann zeta function. We refer to \cite[Theorem 1.2]{Browning-PM277} for a proof, which is simpler than that of \cite[Theorem 1]{Schanuel}. In this case, we have an explicit uniform estimate of $N(\mathbb P^n_{\Q};B)$ as following.
\begin{prop}
The inequality
  \[N(\mathbb P^n_{\Q};B)\leqslant 3^{n+1}B^{n+1}\]
holds for all $B\geqslant1$ and $n\in\mathbb N^+$.
\end{prop}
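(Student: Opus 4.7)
The plan is to reduce the counting problem to a lattice-point count in the box $[-B,B]^{n+1} \cap \mathbb{Z}^{n+1}$ and then apply the trivial inequality $2B+1 \leq 3B$ which holds for $B \geq 1$.

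First I would parametrize the set $S(\mathbb{P}^n_{\mathbb{Q}}; B)$ by primitive integer tuples. As recalled just above the proposition, every rational point $\xi \in \mathbb{P}^n_{\mathbb{Q}}(\mathbb{Q})$ admits a unique-up-to-sign primitive representative $(\xi_0,\ldots,\xi_n) \in \mathbb{Z}^{n+1}$ with $\gcd(\xi_0,\ldots,\xi_n)=1$, and with this choice $H_{\mathbb{Q}}(\xi) = \max_{0 \leq i \leq n} |\xi_i|$. Hence the assignment $\xi \mapsto \pm(\xi_0,\ldots,\xi_n)$ is a two-to-one map from $S(\mathbb{P}^n_{\mathbb{Q}}; B)$ onto
\[
P_n(B) := \bigl\{(\xi_0,\ldots,\xi_n) \in \mathbb{Z}^{n+1} \;\big|\; \gcd(\xi_0,\ldots,\xi_n) = 1,\ \max_i |\xi_i| \leq B\bigr\},
\]
so $2\,N(\mathbb{P}^n_{\mathbb{Q}}; B) = \#P_n(B)$.

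Next I would drop the primitivity and nonvanishing constraint to obtain the crude bound
\[
\#P_n(B) \;\leq\; \#\bigl\{(\xi_0,\ldots,\xi_n) \in \mathbb{Z}^{n+1} \;\big|\; \max_i |\xi_i| \leq B\bigr\} \;=\; (2\lfloor B\rfloor + 1)^{n+1} \;\leq\; (2B+1)^{n+1}.
\]
Since $B \geq 1$ forces $2B + 1 \leq 3B$, this last quantity is at most $(3B)^{n+1} = 3^{n+1} B^{n+1}$. Dividing by $2$ yields $N(\mathbb{P}^n_{\mathbb{Q}}; B) \leq \tfrac{1}{2} \cdot 3^{n+1} B^{n+1}$, which is in particular bounded by $3^{n+1} B^{n+1}$ as claimed.

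There is no real obstacle here: the statement is an elementary uniform (non-asymptotic) upper bound, much weaker than Schanuel's asymptotic \eqref{Schanuel over Q}, and the only point to take care of is that the primitivity condition precisely kills the $(n+1)!$-like overcount, reducing to a factor of $2$ from the ambiguity $\pm 1$ rather than from some finer Möbius inversion. The factor $3^{n+1}$ is not sharp (one could even write $\tfrac{1}{2}3^{n+1}$), but $3^{n+1}$ is chosen so as to absorb the $+1$ in $2B+1$ cleanly for all $B \geq 1$.
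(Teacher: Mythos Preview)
Your proof is correct and follows essentially the same approach as the paper: bound $N(\mathbb P^n_{\Q};B)$ by the number of integer lattice points in the box $[-B,B]^{n+1}$, then use $(2B+1)^{n+1}\leqslant (3B)^{n+1}$ for $B\geqslant1$. You additionally keep track of the two-to-one correspondence with primitive tuples, which yields the slightly sharper $\tfrac{1}{2}3^{n+1}B^{n+1}$, whereas the paper simply writes $N(\mathbb P^n_{\Q};B)\leqslant\#R(\mathbb A_{\mathbb Z}^{n+1};B)$ without this refinement; otherwise the arguments are identical.
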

\begin{proof}
  We consider the set
  \[R(\mathbb A_{\mathbb Z}^{n+1};B)=\left\{\xi=(\xi_0,\ldots,\xi_n)\in\mathbb A_{\mathbb Z}^{n+1}(\mathbb Z)\mid\;\max_{0\leqslant i\leqslant n}\{|\xi|\}\leqslant B\right\}.\]
  Because there are at most $2B+1$ integers whose absolute values are smaller than $B$, we have
  \[\#R(\mathbb A_{\mathbb Z}^{n+1};B)\leqslant(2B+1)^{n+1}\leqslant3^{n+1}B^{n+1}.\]
  In addition, we have $N(\mathbb P^n_{\Q};B)\leqslant\#R(\mathbb A_{\mathbb Z}^{n+1};B)$. So we get the result.
\end{proof}

\subsubsection{The density of algebraic points of projective spaces}\label{counting algebraic points in Pn}
We have discussed $N(\mathbb P^n_K;B)=N(\mathbb P^n_K;1,B)$ above for the case of rational points. For $N(\mathbb P^n_K;D,B)$ with arbitrary $D\in \mathbb N^+$, the situation is very different. Until now, to the authors' knowledge, there is no optimal asymptotic estimate of $N(\mathbb P^n_K;D,B)$ for general $n$, $D$ and $K$. We only have some partial results for these $n$, $D$ and $K$ satisfying certain conditions.

Let $A(K,n,D)$ be a series of positive constants depending on $n,D\in\mathbb N^+$ and the number field $K$. First we consider the case of $n=1$, in which case $\mathbb P^n_K$ is a projective line. In this case, we have
\[N(\mathbb P^1_K;D,B)\sim A(K,1,D)B^{D+1}\]
for all $D\in\mathbb N^+$ and for all number field $K$, see \cite{MasserVaaler2007} or \cite[Th\'eor\`eme 5.1]{LeRudulier2014} for a proof, where the constant $A(K,1,D)$ is explicitly given in the above two references.

Higher dimensional cases are more complicated. Actually, when $n\geqslant3$, we have
\[N(\mathbb P^n_K;2,B)\sim A(K,n,2)B^{n+1}\]
for all $B\geqslant1$ and arbitrary number field $K$ in \cite[Theorem 1.2.1]{Guignard2017}), where the constant $A(K,n,2)$ is given explicitly (loc. cit.). The case of $K=\Q$ is treated in \cite{Schmidt1995}.

For the cases of higher extension degrees $D\in\mathbb N^+$, we have
\[N(\mathbb P^n_K;D,B)\sim A(K,n,D) B^{n+1}\]
for all $n,D\in \mathbb N^+$ satisfying when $n\geqslant D+2$, or a better estimate
\[N(\mathbb P^n_K;D,B)\ll_{K,D}B^{D+1+\frac{n-1}{D}}\log B,\quad B\rightarrow+\infty\]
 holds uniformly for all $D\geqslant n\geqslant3$, see \cite[Theorem 1.2.2]{Guignard2017} for the constant $A(K,n,D)$ involved above. For the case of $K=\Q$, this is a theorem in \cite{GaoXiaThesis}.

\subsection{A naive estimate for arithmetic varieties}

The aim of this section is to prove the following result.
\begin{theo}\label{refined Schanuel estimate for rational points}
Let $n\geqslant2$, $\delta\geqslant1$ and $d\geqslant1$ be three integers. Then the estimate
  \[S(X;B)\ll_{n}\delta B^{d+1},\quad B\geqslant 1\]
  holds uniformly for all pure dimensional closed subscheme $X$ of $\mathbb P^n_{\Q}$ of dimension $d$ and degree $\delta$.
\end{theo}
In order to prove it, we will introduce auxiliary results. First, we introduce the following definition.
\begin{defi}\label{definition of degree of affine scheme}
  Let $k$ be a field, and $X$ be a closed subscheme of $\mathbb A^n_k$. We define the \textit{degree} of $X$ in $\mathbb A^n_k$ to be the degree of its projective closure in $\mathbb P^n_k$. The degree of $X$ defined above is denoted by $\deg(X)$ if there is no confusion.
\end{defi}
By Definition \ref{definition of degree of affine scheme}, we have the following result.
\begin{lemm}\label{degree of affine variety}
  Let $k$ be a field, $X\hookrightarrow\mathbb A_k^n$ be a pure dimensional closed subscheme of dimension $d$ with $d\geqslant1$, and $L$ be a linear subscheme of $\mathbb A_k^n$ which intersects $X$ properly. Then we have
  \[\deg(X)\geqslant\sum_{Z\in\mathcal C(X\cap L)}\deg(Z),\]
where $\deg(\ndot)$ follows Definition \ref{definition of degree of affine scheme}, and $\mathcal C(X\cap L)$ is the set of irreducible components of $X\cap L$ considered to be integral closed subschemes of $\mathbb A^n_k$. Moreover, we define $\deg(Z)=1$ by convention if $Z$ is a closed point.
\end{lemm}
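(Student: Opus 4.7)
The plan is to pass to projective closures and prove the inequality by induction on the codimension $c=n-\dim(L)$ of $L$ in $\mathbb A^n_k$, slicing $L$ as a chain of affine hyperplanes and applying B\'ezout's theorem (Theorem~\ref{bezout}) one hyperplane at a time. By Definition~\ref{definition of degree of affine scheme}, $\deg(X)=\deg(\overline X)$ and $\deg(Z)=\deg(\overline Z)$ for the projective closures in $\mathbb P^n_k$; moreover degrees and irreducible components behave well under flat base change, so we may assume $k$ is infinite. The base case $c=0$ is immediate, since $L=\mathbb A^n_k$ gives $X\cap L=X$ and $\deg(X)\geqslant \sum_{W\in\mathcal{C}(X)}\deg(W)$ follows from the additivity of degree on components of a pure-dimensional scheme (each component appearing with multiplicity at least one).

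For the inductive step with $c\geqslant 1$, pick a basis $f_1,\ldots,f_c$ of the $c$-dimensional space $\Lambda$ of linear forms defining $L$, and set $H=\{f_1=0\}$ and $L'=\{f_2=\cdots=f_c=0\}$, so that $L=H\cap L'$ and $L'$ has codimension $c-1$. The key point is to choose $f_1$ so that no irreducible component of $X$ is contained in $H$: this is possible because the proper-intersection hypothesis forbids any $d$-dimensional component of $X$ from lying inside $L$ (where $X\cap L$ has pure dimension $d-c<d$), hence for each component $X_j$ of $X$ the set of forms in $\Lambda$ vanishing on $X_j$ is a proper subspace of $\Lambda$, and since $k$ is infinite a generic $f_1\in\Lambda$ avoids the finite union of these proper subspaces. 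Consequently $X':=X\cap H$ is pure of dimension $d-1$ and $\overline H$ contains no irreducible component of $\overline X$, so $\overline X$ and $\overline H$ intersect properly in $\mathbb P^n_k$ and B\'ezout's theorem yields $\deg(\overline X\cap\overline H)=\deg(\overline X)\cdot\deg(\overline H)=\deg(X)$. The scheme-theoretic closure $\overline{X'}$ is a closed subscheme of $\overline X\cap\overline H$ pure of dimension $d-1$ (the two schemes agree on the affine chart $\mathbb A^n_k$, and $\overline{X'}$ is the minimal closed subscheme of $\mathbb P^n_k$ containing $X'$), so by the length/degree formula we get $\deg(X')=\deg(\overline{X'})\leqslant\deg(\overline X\cap\overline H)=\deg(X)$.

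Now $X'\cap L'=X\cap L$ has pure dimension $d-c=(d-1)-(c-1)$, so the inductive hypothesis applies to the pair $(X',L')$ and yields $\deg(X')\geqslant\sum_{Z\in\mathcal{C}(X\cap L)}\deg(Z)$, whence $\deg(X)\geqslant\deg(X')\geqslant\sum_{Z\in\mathcal{C}(X\cap L)}\deg(Z)$, as desired. The main obstacle is the hyperplane-selection step---producing $H\supset L$ that contains no component of $X$---which relies essentially on the proper-intersection hypothesis (to rule out components of $X$ lying inside $L$) together with the infinitude of the base field (arranged by base change, since both sides of the inequality are insensitive to it); the auxiliary comparison $\deg(\overline{X'})\leqslant\deg(\overline X\cap\overline H)$ is routine, reflecting that passing from the proper intersection $\overline X\cap\overline H$ to its subscheme $\overline{X'}$ can only decrease multiplicities along common components and discards the possible excess components at infinity.
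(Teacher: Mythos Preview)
Your proof is correct but takes a genuinely different route from the paper's. The paper passes to the projective closures $\overline X,\overline L\subset\mathbb P^n_k$ and applies B\'ezout (Theorem~\ref{bezout}) in a single shot to the product $\overline X\cdot\overline L$, obtaining
\[\deg(\overline X)=\sum_{Z\in\mathcal C(\overline X\cdot\overline L)}i(Z;\overline X\cdot\overline L;\mathbb P^n_k)\deg(Z),\]
and then drops the multiplicities $i(Z;\ldots)\geqslant 1$ and the components $Z$ supported at infinity to reach the inequality. Your argument instead inducts on $c=\codim(L)$, peeling off one carefully chosen hyperplane $H\supset L$ at a time and applying B\'ezout only to the hyperplane section $\overline X\cdot\overline H$. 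The paper's approach is shorter, but it tacitly uses that $\overline X$ and $\overline L$ intersect \emph{properly} in $\mathbb P^n_k$; this is immediate when $L$ is a hyperplane (the only case actually used later, in Proposition~\ref{affine cone lemma}), yet for a general linear $L$ the closures can in principle acquire excess components at infinity, a point the paper does not address. Your inductive slicing sidesteps this entirely: at each step you intersect only with a hyperplane chosen to miss every component of the current scheme, so properness of $\overline X\cdot\overline H$ is automatic. The price is some bookkeeping (base change to an infinite field, the hyperplane-selection step, and the comparison $\deg(\overline{X'})\leqslant\deg(\overline X\cap\overline H)$), all of which you handle correctly. One small remark: your induction terminates at a pair $(X^{(c)},\mathbb A^n_k)$ with $\dim X^{(c)}=d-c$, which is $0$ when $c=d$; your base case still covers this, since $\deg(X)\geqslant\sum_{W\in\mathcal C(X)}\deg(W)$ holds for pure $0$-dimensional schemes as well (degree is length, each reduced point contributes $1$).
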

\begin{proof}
  Let $\overline X$ and $\overline L$ be the projective closure of $X$ and $L$ in $\mathbb P^n_k$ respectively, then we have $\deg(X)=\deg(\overline X)$ and $\deg(L)=\deg(\overline L)=1$ by Definition \ref{definition of degree of affine scheme}. By the B\'ezout's Theorem (Theorem \ref{bezout}), we have
  \[\deg(\overline X)=\deg(\overline X)\deg(\overline L)=\sum_{Z\in\mathcal C(\overline X\cdot\overline L)}i(Z;\overline X\cdot\overline L;\mathbb P^n_k)\deg(Z),\]
  where $\mathcal C(\overline X\cdot\overline L)$ is the set of irreducible components of the intersection $\overline X\cdot\overline L$, and $i(Z;\overline X\cdot\overline L;\mathbb P^n_k)$ is the intersection multiplicity of $\overline X\cdot \overline L$ at $Z$. For each $Z\in\mathcal C(\overline X\cdot\overline L)$, let $a(Z)$ be the restriction of $Z$ in $\mathbb A^n_k$ involved above. By Definition \ref{definition of degree of affine scheme}, we have $\deg(Z)=\deg(a(Z))$ if $a(Z)\neq\emptyset$. So we obtain
  \[\sum_{Z\in\mathcal C(\overline X\cdot\overline L)}i(Z;\overline X\cdot\overline L;\mathbb P_k^n)\deg(Z)\geqslant\sum_{Z\in\mathcal C(\overline X\cdot\overline L)}\deg(a(Z))=\sum_{Z\in\mathcal C(X\cap L)}\deg(Z),\]
  where we define $\deg(a(Z))=0$ if $a(Z)=\emptyset$ above. The reason is that each intersection multiplicity is larger than or equal to $1$. So we have the result.
\end{proof}
We need the following lemma about the definition of Krull dimension of a topological space. We refer the reader its definition at \cite[Definition 2.5.1]{LiuQing}.
\begin{lemm}\label{irreudicible closed subset}
  Let $k$ be a field, and $X$ be a non-empty closed irreducible subset of the affine space $\mathbb A^n_k$ whose dimension is $d$, where $d\geqslant0$. Then $X$ has no proper closed subset of dimension $d$. A proper subset of $X$ means a subset of $X$ which is not equal to $X$ itself.
\end{lemm}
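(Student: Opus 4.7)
The plan is to argue by contradiction using the definition of Krull dimension of a topological space (the supremum of lengths of strictly increasing chains of irreducible closed subsets). Suppose, for contradiction, that there exists a proper closed subset $Y \subsetneq X$ with $\dim(Y) = d$. The goal is to produce a chain of irreducible closed subsets of $X$ of length $d+1$, which would contradict $\dim(X) = d$.

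Since $\dim(Y) = d$, by definition of Krull dimension applied to $Y$ (equipped with its induced topology from $\mathbb{A}^n_k$), there exists a strictly increasing chain
\[ Y_0 \subsetneq Y_1 \subsetneq \cdots \subsetneq Y_d \subseteq Y \]
of irreducible closed subsets of $Y$. Because $Y$ itself is closed in $X$, each $Y_i$ is also a closed irreducible subset of $X$. I would then use the irreducibility of $X$: since $Y \subsetneq X$ is a proper closed subset and $X$ is irreducible, $X$ cannot coincide with any closed subset of $Y$, so in particular $Y_d \subsetneq X$. Appending $X$ to the chain yields
\[ Y_0 \subsetneq Y_1 \subsetneq \cdots \subsetneq Y_d \subsetneq X, \]
a strictly increasing chain of irreducible closed subsets of $X$ of length $d+1$. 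This contradicts $\dim(X) = d$.

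There is essentially no obstacle here; the statement is a purely topological fact about irreducible Noetherian spaces, and the only mild subtlety is to keep straight that a closed irreducible subset of $Y$ remains closed and irreducible in $X$ (because $Y$ is closed in $X$). No special feature of $\mathbb{A}^n_k$ or of the field $k$ is used; the argument works for any irreducible topological space of finite Krull dimension.
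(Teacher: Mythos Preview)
Your proof is correct and follows essentially the same approach as the paper: argue by contradiction, extract from the hypothetical proper closed subset a length-$d$ chain of irreducible closed subsets, append $X$ to obtain a chain of length $d+1$, and conclude $\dim(X)\geqslant d+1$. The only cosmetic differences are that the paper writes its chain in decreasing order and assumes at the outset that the proper closed subset is irreducible, whereas you (correctly) note that this assumption is unnecessary.
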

\begin{proof}
  We suppose that $X$ has a proper irreducible closed subset $X'$ of dimension $d$. Let
  \[X'=X_0\supsetneq X_1\supsetneq\cdots\supsetneq X_d\]
  be a sequence of non-empty irreducible closed subsets of $X'$. Then we have the following sequence of non-empty irreducible closed subsets of $X$
  \[X\supsetneq X_0\supsetneq X_1\supsetneq\cdots\supsetneq X_d,\]
  which shows that the dimension of $X$ is at least $d+1$. This leads to a contradiction.
\end{proof}
Next, we prove a lemma about the intersection of affine schemes.
\begin{lemm}\label{intersection with a hyperplane}
  Let $k$ be a field, and $X$ be an irreducible closed subscheme of $\mathbb A^n_{k}=\spec\left(k[T_1,\ldots,T_n]\right)$, which is of dimension $d$ with $1\leqslant d\leqslant n-1$. Then there exists an index $\alpha\in\{1,\ldots,n\}$, such that for all $a\in k$, the hyperplane defined by the equation $T_\alpha=a$ intersects $X$ properly.
\end{lemm}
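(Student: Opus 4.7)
The plan is to find some index $\alpha$ such that $X$ is not contained in any hyperplane of the form $\{T_\alpha=a\}$ with $a\in k$, and then verify that this single condition forces all such hyperplanes to intersect $X$ properly. For each $\alpha\in\{1,\ldots,n\}$ and $a\in k$, write $L_{\alpha,a}$ for the hyperplane $\{T_\alpha=a\}$ in $\mathbb A^n_k$, and set
\[B_\alpha=\{a\in k\mid X\subseteq L_{\alpha,a}\}.\]
Since any two distinct hyperplanes $L_{\alpha,a}$ and $L_{\alpha,a'}$ are disjoint and $X$ is non-empty, one has $|B_\alpha|\leqslant 1$.

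The core of the argument is by contradiction: suppose that no $\alpha$ works, so that for each $\alpha\in\{1,\ldots,n\}$ there exists $a\in k$ for which $L_{\alpha,a}$ does not intersect $X$ properly. Any irreducible component $Z$ of $X\cap L_{\alpha,a}$ has dimension at least $d-1$ by Krull's Hauptidealsatz applied to the integral scheme underlying $X$, and at most $d$ since $Z\subseteq X$; failure of proper intersection therefore forces some $Z$ with $\dim Z=d$, and Lemma \ref{irreudicible closed subset} forces $Z=X$. Then $X\subseteq L_{\alpha,a}$, so $a\in B_\alpha$, and by uniqueness $B_\alpha=\{a_\alpha\}$. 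Intersecting over all indices yields
\[X\subseteq\bigcap_{\alpha=1}^n L_{\alpha,a_\alpha}=\{(a_1,\ldots,a_n)\},\]
so $\dim X=0$, contradicting $d\geqslant 1$. For the index $\alpha$ singled out by the contradiction, $B_\alpha$ is empty, and the same Krull estimate shows that every non-empty irreducible component of $X\cap L_{\alpha,a}$ has dimension exactly $d-1$ for every $a\in k$; hence $L_{\alpha,a}$ intersects $X$ properly, whether the intersection is empty or not.

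I do not expect a serious obstacle here. The one subtlety worth flagging is that the paper's convention allows empty intersections to count as proper, which is essential when $k$ is not algebraically closed: for instance, if the projection of $X$ to the $\alpha$-th coordinate axis is a closed point whose residue field strictly extends $k$, then $L_{\alpha,a}\cap X$ is empty for every $a\in k$. The argument above handles this case transparently.
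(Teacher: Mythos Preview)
Your argument is correct and follows essentially the same route as the paper: assume every index fails, use the dimension bounds together with Lemma \ref{irreudicible closed subset} to force $X$ into a hyperplane $L_{\alpha,a_\alpha}$ for each $\alpha$, and derive $\dim X=0$ from $X\subseteq\{(a_1,\ldots,a_n)\}$. The only cosmetic differences are your explicit bookkeeping via the sets $B_\alpha$, your invocation of Krull's Hauptidealsatz in place of the paper's reference to \cite[Chap.~III, Prop.~17]{SerreLocAlg}, and your added remark on empty intersections; none of these alters the substance.
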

\begin{proof}
  For $\alpha\in\{1,\ldots,n\}$ and $a\in k$, we denote by $H(T_\alpha=a)$ the hyperplane defined by the equation $T_\alpha=a$. By \cite[Chap. III, Prop. 17]{SerreLocAlg}, we have $\dim(X\cap H(T_\alpha=a))\geqslant d+n-1-n=d-1$ for all $\alpha\in\{1,\ldots,n\}$ and all $a\in k$. By \cite[Proposition 2.5.5 (a)]{LiuQing}, we have
  \[\dim(X\cap H(T_\alpha=a))\leqslant\dim(X)=d.\]
   If for each $\alpha\in\{1,\ldots,n\}$, we can find an element $a\in k$ such that $X\cap H(T_\alpha=a)$ is not a proper intersection, which means that we have $\dim(X\cap H(T_\alpha=a))=d$ by definition directly.

  The set $X\cap H(T_\alpha=a)$ is a closed subset of $X$ and $H(T_\alpha=a)$ by the definition of topological space. By Lemma \ref{irreudicible closed subset}, there is no proper closed subset of $X$ whose dimension is $d$ since the scheme $X$ is irreducible and $\dim(X)=d$. From the fact $\dim(X\cap H(T_\alpha=a))=d$, we have $X=X\cap H(T_\alpha=a)$. So we obtain $X\subseteq H(T_\alpha=a)$.

   From the above hypothesis, for all $\alpha\in\{1,\ldots,n\}$, there exists an element $a\in k$ such that $X\subseteq H(T_\alpha=a)$. For every $\alpha\in\{1,\ldots,n\}$, we choose one of these elements in $k$, noted by $a_n$. Then we have $X\subseteq\;H(T_1=a_1)\cap\cdots\cap H(T_n=a_n)$. The scheme $H(T_1=a_1)\cap\cdots\cap H(T_n=a_n)$ is the rational point in $\mathbb A^n_{k}$ whose affine coordinate is $(a_1,\ldots,a_n)$, so we have
  \[X\subseteq (a_0,\ldots,a_n).\]
  This contradicts the fact that $d\geqslant1$. So we prove the result.
\end{proof}
Now we prove a proposition of counting integral points in affine schemes. Before doing this, we introduce a definition of $\mathbb Z$-points of a $\Q$-scheme.

Let $\phi:X\hookrightarrow\mathbb A^n_{\mathbb Q}$ be an arbitrary affine subscheme of $\mathbb A^n_{\Q}$, then we have the following diagram:
\[\xymatrix{X\ar@{^{(}->}^{\phi}[r]&\mathbb A^n_{\Q}\ar[r]^\pi\ar[d]\ar@{}|-{\square}[dr]&\mathbb A^n_{\mathbb Z}\ar[d]\\
&\spec \Q\ar[r]&\spec \mathbb Z.}\]
\begin{defi}\label{Z-point of a Q-scheme}
With the above construction, we denote by $X_{\phi}(\mathbb Z)$ the subset of $X(\Q)$ of the $\xi\in X(\Q)$ (considered as $\Q$-morphisms from $\spec\Q$ to $X$) whose composition with the canonical immersion morphism $\phi:X\hookrightarrow \mathbb A^n_\Q$ gives a $\mathbb Z$-point of $\mathbb A^n_{\mathbb Z}$ having the value in $\mathbb Q$ which comes from a $\mathbb Z$-point of $\mathbb A^n_{\mathbb Z}$. In other words, we define $X_{\phi}(\mathbb Z)=X(\mathbb Q)\cap\pi^{-1}(\mathbb A^n_{\mathbb Z}(\mathbb Z))$. Instead of $X_{\phi}(\mathbb Z)$, we denote this set by $X(\mathbb Z)$ if there is no confusion of the morphism $\phi$.
\end{defi}

\begin{prop}\label{affine cone lemma}
  For any $B\geqslant1$, and any subscheme $X$ of $\mathbb A^{n+1}_{\Q}$, let
  \[M(X;B)=\left\{\xi=(\xi_0,\ldots,\xi_n)\in X(\mathbb Z) \middle| \;\max_{0\leqslant i\leqslant n}\{|\xi_i|\}\leqslant B\right\},\]
where the set $X(\mathbb Z)$ is defined in Definition \ref{Z-point of a Q-scheme}. Let $\delta\geqslant1$, $d\geqslant1$ and $n\geqslant1$ be three integers, then the estimate
  \[\#M(X;B)\ll_{n}\delta B^{d},\quad B\geqslant 1\]
holds uniformly for all pure dimensional closed subschemes $X$ of $\mathbb A^{n+1}_{\Q}$ of dimension $d$ and degree $\delta$.
\end{prop}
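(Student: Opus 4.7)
The plan is to proceed by induction on the dimension $d$, using Lemma \ref{intersection with a hyperplane} to select a coordinate direction along which to slice $X$, Lemma \ref{degree of affine variety} to control the total degree of the resulting slices, and the induction hypothesis to count integer points on each slice. A preliminary reduction to the case where $X$ is integral is available: since $\mathbb{Z}$ and $\mathbb{Q}$ are reduced, $X(\mathbb{Z}) = X_{\mathrm{red}}(\mathbb{Z})$, and decomposing $X_{\mathrm{red}}$ into its irreducible (reduced) components $X_1, \ldots, X_r$ yields $M(X;B) = \bigcup_i M(X_i;B)$ together with $\sum_i \deg(X_i) \leqslant \deg(X) = \delta$, so it is enough to prove the estimate separately for each integral component.

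For the base case $d = 1$, Lemma \ref{intersection with a hyperplane} produces an index $\alpha$ such that the hyperplane $H(T_\alpha = a)$ cuts $X$ properly for every $a \in \mathbb{Q}$. Any $\xi \in M(X;B)$ satisfies $\xi_\alpha \in \mathbb{Z} \cap [-B,B]$, so there are at most $2B+1$ choices for $\xi_\alpha$. For each such integer $a$, the scheme $X \cap H(T_\alpha = a)$ is zero-dimensional, and by Lemma \ref{degree of affine variety} the sum of the degrees of its irreducible components (each a closed point of degree $1$ by convention) is at most $\delta$. Since each closed point carries at most one $\mathbb{Q}$-rational point, this gives $\#M(X;B) \leqslant (2B+1)\,\delta \leqslant 3\,\delta B$.

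For the inductive step with $2 \leqslant d \leqslant n$, proceed analogously: select $\alpha$ via Lemma \ref{intersection with a hyperplane}, stratify $M(X;B)$ according to the integer value $a$ of the coordinate $\xi_\alpha$ with $|a| \leqslant B$, and for each such $a$ decompose $X \cap H(T_\alpha = a)$ into irreducible components $Z_1, \ldots, Z_m$ of dimension $d-1$. Lemma \ref{degree of affine variety} gives $\sum_j \deg(Z_j) \leqslant \delta$, and the induction hypothesis applied to each $Z_j$ yields
\begin{equation*}
\#M(X \cap H(T_\alpha = a); B) \leqslant \sum_{j=1}^m \#M(Z_j;B) \ll_n B^{d-1} \sum_{j=1}^m \deg(Z_j) \leqslant \delta B^{d-1},
\end{equation*}
and summing over the at most $3B$ admissible values of $a$ delivers the desired bound. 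The remaining edge case $d = n+1$ is not covered by Lemma \ref{intersection with a hyperplane}, but here $X_{\mathrm{red}} = \mathbb{A}^{n+1}_{\mathbb{Q}}$ by dimension reasons, so $\#M(X;B) = (2B+1)^{n+1} \leqslant 3^{n+1}\,\delta B^{n+1}$ directly.

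The main difficulty is essentially bookkeeping of the implicit constant: each inductive step multiplies it by a factor of $3$, leading to a final constant of order $3^{n+1}$ that depends only on $n$, as required by the notation $\ll_n$. The only conceptual point that needs care is that the direction $\alpha$ furnished by Lemma \ref{intersection with a hyperplane} is chosen independently of $a$, which is precisely what allows one to stratify $M(X;B)$ over all integer values $a \in [-B,B]$ using a single hyperplane direction rather than having to vary $\alpha$ with $a$.
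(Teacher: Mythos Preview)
Your proof is correct and follows essentially the same approach as the paper's: reduce to the irreducible case, induct on $d$, and at each step use Lemma~\ref{intersection with a hyperplane} to pick a slicing direction and Lemma~\ref{degree of affine variety} to bound the total degree of the slice before applying the induction hypothesis. You supply a bit more detail on the reduction to irreducible components and explicitly dispatch the edge case $d = n+1$ (which the paper's proof does not address, though it is not needed for the downstream application), but the argument is otherwise identical.
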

\begin{proof}
We can suppose that $X$ is irreducible, else we can count it component by component.

We reason by induction on $d$ to prove this lemma. If $d=1$, by Lemma \ref{intersection with a hyperplane}, there exists an index $\alpha\in\{1,\ldots,n\}$ such that $X$ intersects the hyperplane defined by $T_\alpha=a$ properly. Let $H_a$ denote this hyperplane. Then we have
 \begin{equation}
   M(X;B)=\bigcup_{\begin{subarray}{c}a\in\mathbb Z\\|a|\leqslant B \end{subarray}}M(X\cap H_a;B), \quad B\geqslant1.
 \end{equation}
  From Lemma \ref{degree of affine variety}, each set $M(X\cap H_a;B)$ contains at most $\delta$ closed points. By the fact that there are at most $2B+1$ integers whose absolute values are smaller than $B$, we get
 \[M(X;B)\leqslant\delta(2B+1),\quad B\geqslant1,\]
 which proves the case of $d=1$.

 Next, we suppose that $d\geqslant2$. In this case, by Lemma \ref{intersection with a hyperplane}, we can find an index $\alpha\in\{0,\ldots,n\}$ such that $X$ intersects the hyperplane defined by $T_\alpha=a$ properly for any $a\in \mathbb Q$. Let $H_a$ denote this hyperplane. Then we have
 \begin{equation}\label{intersected by hyperplane}
   M(X;B)=\bigcup_{\begin{subarray}{c}a\in\mathbb Z\\|a|\leqslant B \end{subarray}}M(X\cap H_a;B),\quad B\geqslant1.
 \end{equation}

 For every $a\in\mathbb Z$ above, the scheme $X\cap H_a$ has dimension at most $d-1$. By Lemma \ref{degree of affine variety}, we have the inequality
\[\delta=\deg(X)\geqslant\sum_{Z\in \mathcal C(X\cap H_a)}\deg(Z),\]
where $\mathcal C(X\cap H_a)$ is the set of irreducible components of $X\cap H_a$.

By the induction hypothesis, we have
\[\#M(Z;B)\ll_{n}\deg(Z)B^{d-1},\quad B\geqslant 1 \]
 for all $Z\in \mathcal C(X\cap H_a)$. So we have
 \[\#M(X\cap H_a;B)\ll_{n}\delta B^{d-1},\quad B\geqslant 1.\]
 On the other hand, by the relation \eqref{intersected by hyperplane}, we have
 \[\#M(X;B)\leqslant\sum_{\begin{subarray}{c}a\in\mathbb Z\\|a|\leqslant B \end{subarray}}\#M(X\cap H_a;B), \quad B\geqslant1.\]
 There are at most $2B+1$ integers whose absolute values are smaller than $B$. So we have the result from the induction hypothesis.
\end{proof}
With the help of Lemma \ref{affine cone lemma}, we can prove Theorem \ref{refined Schanuel estimate for rational points}. The main idea comes from the proof of \cite[Theorem 3.1]{Browning-PM277}.
\begin{proof}[Proof of Theorem \ref{refined Schanuel estimate for rational points}]
   Let $\hat X$ be the affine cone of $X$ in $\mathbb A^{n+1}_{\mathbb Q}$. Then we have
  \[N(X;B)\leqslant \#M(\hat{X};B),\quad B\geqslant1,\]
  where the set $M(\hat X;B)$ follows the notation in the statement of Proposition \ref{affine cone lemma}.
  By \cite[Chap. I, Exercise 2.10]{GTM52}, the scheme $\hat{X}$ is of pure dimension, whose dimension is $d+1$.

   Let $\overline X$ be the projective closure of $\hat X$ in $\mathbb P^{n+1}_\Q$. Suppose $X=\proj\left(\Q[T_0,\ldots,T_n]/\a_X\right)$ and $\overline X=\proj\left(\Q[T_0,\ldots,T_{n+1}]/\a_{\overline X}\right)$. By \cite[(8.3.1.1), (8.3.1.2)]{EGAII}, we have
  \[\Q[T_0,\ldots,T_{n+1}]/\a_{\overline X}=\left(\Q[T_0,\ldots,T_n]/\a_X\right)\otimes_{\Q}\Q[T_{n+1}]\]
  as $\Q$-vector spaces.

  By \cite[Corollary 1.1.13]{Joins}, we obtain that the Hilbert function of $\overline X$ is the convolution of the Hilbert function of $X$ and that of $\proj\left(\Q[T_{n+1}]\right)$. By \cite[Lemma 1.1.12]{Joins}, we get $\deg(\hat X)=\deg(\overline X)=\deg(X)=\delta$ which follows Definition \ref{definition of degree of affine scheme}. So we get the result from Proposition \ref{affine cone lemma}.
\end{proof}

\subsection{Varieties of degree larger than $1$}\label{HB conjecture}
Let $X\hookrightarrow\mathbb P^n_K$ be an integral closed projective scheme over the number field $K$, whose dimension is $d$ and degree is $\delta$. In Theorem \ref{refined Schanuel estimate for rational points}, we give the optimal response to the case of $\delta=1$. Actually, the scheme $X$ is isomorphic to $\mathbb P^d_K$ in this case.

 We consider the cases which the degree is greater than or equal to $2$. In \cite[Conjecture 2]{Heath-Brown}, D. R. Heath-Brown conjectured that, if $d$, $\delta$ and $n$ are integers such that $d\geqslant2$, $\delta\geqslant3$, $n\geqslant4$, and $B\geqslant1$ be a real number. Then for any $\epsilon>0$, the estimate
\begin{equation}\label{strong HB conjecture}
N(X;B)\ll_{n,\epsilon,K}B^{d+\epsilon}
\end{equation}
or a weaker one
\begin{equation}\label{weak HB conjecture}
N(X;B)\ll_{n,\epsilon,K,\delta}B^{d+\epsilon}
\end{equation}
hold uniformly for every integral closed subscheme $X$ of $\mathbb P_K^n$ of degree $\delta$ and dimension $d$. He has also given a proof for the case of $\delta=2$. In order to solve this conjecture, T. Browning, D. R. Heath-Brown and P. Salberger have published several papers on this topic, see \cite{Browning_Heath05,Browning_Heath06I,Browning_Heath06II,Bro_HeathB_Salb,Salberger07} for their works on this subject and \cite[Chapter 3]{Browning-PM277} for a survey. In their former work, they imposed some technical conditions of $X$. For example, in \cite[Theorem 0.1]{Salberger07}, P. Salberger proved \cite[Conjecture 2]{Heath-Brown} when $X$ contains finitely many linear locus of codimension $1$ and $\delta\geqslant4$.

When we consider the conjecture \eqref{weak HB conjecture}, another important issue is to consider the order of $\delta$ in this estimate. This estimate will be useful for this multiplicity-counting problem, see \S\ref{counting multiplicity}.

\section{Estimate of multiplicities in a hypersurface}

In order to study the multiplicities in a projective hypersurface, first we introduce some facts about the multiplicity of a point in a hypersurface.

\subsection{Multiplicity in a section of hypersurface}
Let $k$ be an arbitrary field, and $f\in k[T_0,\ldots,T_n]$ be a non-zero homogeneous polynomial of degree $\delta$. We consider the scheme
\[X=V(f)=\proj\left(k[T_0,\ldots,T_n]/(f)\right).\]
In fact, the scheme $X$ is a pure dimensional closed subscheme of $\mathbb P^n_k$ and it is a hypersurface in it. We can prove that $X$ is of degree $\delta$ (cf. \cite[Proposition 7.6, Chap. I]{GTM52}).

Let $\alpha\in[0,\delta]\cap \mathbb N$. We denote by $\mathcal{T}^{\alpha}(f)$ be $k$-vector space spanned by all the partial derivatives of $f$ of order $\alpha$ which are of the following form
\[\dfrac{\partial^{|I|}f}{\partial T^I} = \dfrac{\partial^{i_0+\cdots+i_n}f}{\partial T_0^{i_0}\cdots\partial T_n^{i_n}}\]
for $I = (i_0,\cdots,i_n)\in\mathbb{N}^{n+1}$ with $|I| = i_0+\cdots+i_n = \alpha$. These elements are homogeneous polynomials of degree $\delta-\alpha$.

The following proposition is an explicit criterion in determining the multiplicity of a point in a hypersurface.
\begin{prop}[Corollaire 5.4, \cite{Liu-multiplicity}] \label{taylor expansion}
Let $k$ be a arbitrary field of characteristic $0$, $X\hookrightarrow\mathbb P^n_k$ be a hypersurface defined by an arbitrary non-zero homogeneous polynomial $f$ of degree $\delta$, $\eta\in X$ be an arbitrary point, and $\alpha$ be an arbitrary integer in $[0,\mu_\eta(X)-1]$. Then for every non-zero $g\in\mathcal{T}^{\alpha}(f)$, the point $\eta$ is contained in the hypersurface $X'$ defined by $g$. On the contrary, there exists a non-zero element $g'\in\mathcal{T}^{\mu_\eta(X)}(f)$, such that $\eta$ is not contained in the hypersurface defined by $g'$.
\end{prop}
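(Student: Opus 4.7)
The plan is to reduce both assertions to a local computation in an affine chart containing $\eta$ and to exploit the characteristic-zero Taylor expansion there. After permuting the homogeneous coordinates, I may assume $\eta\in D_+(T_0)$; set $y_j=T_j/T_0$ and $\tilde f(y_1,\ldots,y_n)=f(1,y_1,\ldots,y_n)$. The first step is to identify $\mu_\eta(X)$ with the $\sm_\eta$-adic order of $\tilde f$ in the regular local ring $\O_{\mathbb P^n_k,\eta}$: for a hypersurface in a regular ambient scheme, the Hilbert--Samuel multiplicity at $\eta$ agrees with the largest integer $\nu$ such that $\tilde f\in\sm_\eta^{\nu}$, via a standard computation with the associated graded ring $\mathrm{gr}_{\sm_\eta}\O_{\mathbb P^n_k,\eta}$.

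For the first assertion I would use that affine differentiation decreases the $\sm_\eta$-adic order by at most one: $h\in\sm_\eta^{\nu}$ implies $\partial h/\partial y_j\in\sm_\eta^{\nu-1}$. The non-trivial bookkeeping is that $\mathcal T^{\alpha}(f)$ is built from derivatives with respect to the homogeneous $T_i$, not the affine $y_j$; the identity $\partial f/\partial T_j|_{T_0=1}=\partial\tilde f/\partial y_j$ for $j\geqslant 1$ together with the Euler relation
$$\dfrac{\partial f}{\partial T_0}\bigg|_{T_0=1}=\delta\,\tilde f-\sum_{j=1}^n y_j\dfrac{\partial\tilde f}{\partial y_j}$$
allows an induction on $\alpha$ showing that the dehomogenization of any $\partial^\alpha f/\partial T^I$ with $|I|=\alpha$ is a $k[y_1,\ldots,y_n]$-linear combination of $\tilde f$ and of affine partial derivatives of $\tilde f$ of orders at most $\alpha$. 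Each such term lies in $\sm_\eta^{\mu_\eta(X)-\alpha}\subseteq\sm_\eta$ as soon as $\alpha\leqslant\mu_\eta(X)-1$, which yields $\eta\in V(g)$ for every non-zero $g\in\mathcal T^{\alpha}(f)$.

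For the converse I would work with the initial form. Since $\tilde f\in\sm_\eta^{\mu_\eta(X)}\setminus\sm_\eta^{\mu_\eta(X)+1}$, its class in $\mathrm{gr}_{\sm_\eta}\O_{\mathbb P^n_k,\eta}\cong\kappa(\eta)[Y_1,\ldots,Y_n]$ is a non-zero homogeneous polynomial of degree $\mu_\eta(X)$; pick a monomial $Y^\beta$, $|\beta|=\mu_\eta(X)$, appearing with non-zero coefficient. After base-changing to $\kappa(\eta)$ and translating so that $\eta$ becomes the origin, the Taylor expansion of $\tilde f$ at $\eta$ has the same initial form, so the value at $\eta$ of the affine partial $\partial^{\mu_\eta(X)}\tilde f/\partial y^{\beta}$ equals $\beta!$ times the selected coefficient; this is non-zero precisely because the characteristic-zero hypothesis ensures $\beta!\neq 0$ in $\kappa(\eta)$. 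The homogeneous polynomial $g':=\partial^{\mu_\eta(X)}f/\partial T^{(0,\beta_1,\ldots,\beta_n)}$ is then the desired element of $\mathcal T^{\mu_\eta(X)}(f)$ not vanishing at $\eta$.

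The main obstacle I expect is not any single deep step but rather the careful bookkeeping between $T_i$-derivatives and $y_j$-derivatives, notably the asymmetric role played by $T_0$ and the need to handle $\partial/\partial T_0$ via the Euler relation without losing track of vanishing orders; and the delicate point that the characteristic-zero assumption enters crucially in the converse direction through the non-vanishing of $\beta!$ in $\kappa(\eta)$.
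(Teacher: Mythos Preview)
The paper does not prove this proposition; it is imported as Corollaire~5.4 of \cite{Liu-multiplicity} and stated without argument, so there is no in-paper proof to compare against. Your strategy is the standard one, and for closed $k$-rational points it is entirely correct: the identification of $\mu_\eta(X)$ with the $\sm_\eta$-adic order of $\tilde f$, the Euler-relation bookkeeping between $\partial/\partial T_i$ and $\partial/\partial y_j$, and the Taylor-coefficient extraction for the converse all go through. Your argument for the first assertion in fact works for arbitrary $\eta$, since any $k$-derivation of $\O_{\P^n_k,\eta}$ maps $\sm_\eta^\nu$ into $\sm_\eta^{\nu-1}$.

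The converse for non-closed $\eta$ has a genuine gap, and the paper does apply this proposition at generic points of subvarieties. Your isomorphism $\mathrm{gr}_{\sm_\eta}\O_{\P^n_k,\eta}\cong\kappa(\eta)[Y_1,\ldots,Y_n]$ is wrong there: the graded ring has only $n-\dim\overline{\{\eta\}}$ variables, and these correspond to a regular system of parameters, not to the $y_j$. Base-changing to $\kappa(\eta)$ does give a closed rational point $\eta'$ over $\eta$, but your claim that the Taylor expansion there ``has the same initial form'' presupposes that the $\sm_{\eta'}$-adic order of $\tilde f$ still equals $\mu_\eta(X)$; since $\sm_\eta\cdot\O_{\P^n_{\kappa(\eta)},\eta'}\subsetneq\sm_{\eta'}$ whenever $\eta$ is non-closed, only the inequality $\geqslant$ is immediate. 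The missing inequality is exactly the Zariski--Nagata description of symbolic powers in characteristic $0$ (namely $\p^{(\nu)}=\{h:\partial^I h\in\p\text{ for all }|I|<\nu\}$), which is equivalent to what you are trying to establish. A clean non-circular repair: by upper semi-continuity of $\xi\mapsto\mu_\xi(X)$, after passing to $\overline k$ choose a closed point $\xi\in\overline{\{\eta\}}$ with $\mu_\xi(X)=\mu_\eta(X)$; your closed-point argument then yields $g'\in\mathcal T^{\mu_\eta(X)}(f)$ with $g'(\xi)\neq 0$, and $g'(\eta)\neq 0$ follows because the prime of $\eta$ is contained in that of $\xi$.
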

In particular, if $\mu_{\eta}(X) > 1$, then for each non-zero $g \in \mathcal{T}^1(f)$, $\eta$ lies in the hypersurface defined by $g$. An immediate consequence of this proposition is that for each hypersurface $X'$ mentioned in Proposition \ref{taylor expansion}, we have $\mu_{\eta}(X') \geqslant \mu_{\eta}(X) - \alpha$, $\forall \alpha \in [0,\mu_\eta(X)-1]$.

\subsection{Construction of intersection trees}\label{construction of intersection trees}
By virtue of Proposition \ref{taylor expansion}, we can construct a family of intersection trees to solve the multiplicity-counting problem. First, we introduce the following proposition to construct the roots of these intersection trees.
\begin{prop}[Lemme 5.8, \cite{Liu-multiplicity}]\label{construction of roots}
Let $K$ be an arbitrary number field, $f$ be an arbitrary non-zero homogeneous polynomial of degree $\delta$ in $K[T_0,\ldots,T_n]$, and $s\in[0,n-2]\cap\mathbb N$. We denote by $V(f)$ the projective hypersurface defined by $f$. If the singular locus of $V(f)$ is of dimension $s$, then there exists a family of directional derivatives $g_1,\ldots, g_{n-s-1}\in\mathcal{T}^1(f)$ of $f$, such that the equality
\[\dim(V(f)\cap V(g_1)\cap\cdots\cap V(g_{n-s-1}))=s\]
is verified. In the other words, $V(f)\cap V(g_1)\cap\cdots\cap V(g_{n-s-1})$ is a complete intersection.
\end{prop}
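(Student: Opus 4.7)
The plan is to construct $g_1,\ldots,g_{n-s-1}$ inductively, maintaining at step $j\in\{0,\ldots,n-s-1\}$ the invariant
\[\dim Y_j=n-1-j,\qquad\text{where } Y_j:=V(f)\cap V(g_1)\cap\cdots\cap V(g_j).\]
The base case $j=0$ is simply that $V(f)$ is a hypersurface of dimension $n-1$. If the invariant can be propagated up to $j=n-s-1$, then $Y_{n-s-1}$ has dimension exactly $s$, and since the dimension drops by one at each intersection, the family $V(f),V(g_1),\ldots,V(g_{n-s-1})$ intersects properly, i.e.\ is a complete intersection.

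For the inductive step, suppose $\dim Y_j=n-1-j$ with $j\leqslant n-s-2$, and let $Z_1,\ldots,Z_m$ be the irreducible components of $Y_j$ of maximal dimension $n-1-j$. Since $K$ has characteristic zero, Euler's identity $\delta f=\sum_{i=0}^{n}T_i\cdot\partial f/\partial T_i$ puts $f$ in the ideal generated by its partial derivatives, so the singular locus of $V(f)$ is precisely $V(\partial f/\partial T_0,\ldots,\partial f/\partial T_n)$, which by hypothesis has dimension $s$. Because $n-1-j>s$ at this stage, none of the $Z_a$ can be contained in $\mathrm{Sing}(V(f))$; hence for each $a$ there is some index $i(a)$ such that $\partial f/\partial T_{i(a)}$ does not vanish identically on $Z_a$. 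Consequently the restriction map $\mathcal{T}^1(f)\to H^0(Z_a,\mathcal{O}_{Z_a}(\delta-1))$ is nonzero, so its kernel
\[W_a:=\{g\in\mathcal{T}^1(f)\mid g|_{Z_a}\equiv 0\}\]
is a \emph{proper} $K$-linear subspace of the finite-dimensional space $\mathcal{T}^1(f)$.

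Because $K$ is a number field, and hence infinite, the finite union $W_1\cup\cdots\cup W_m$ cannot cover $\mathcal{T}^1(f)$, and we may pick any $g_{j+1}$ in the complement. Then $V(g_{j+1})$ avoids every top-dimensional component $Z_a$ of $Y_j$; by Krull's principal ideal theorem, each irreducible piece of $V(g_{j+1})\cap Z_a$ has codimension one in $Z_a$, i.e.\ dimension $n-2-j$, while the lower-dimensional components of $Y_j$ contribute intersections of dimension at most $n-2-j$. Combined with the Serre inequality recalled in \S\ref{subsection_intersection_theory}, this yields $\dim Y_{j+1}=n-2-j$, closing the induction. The main obstacle is the inductive step: the whole argument pivots on finding a single $g_{j+1}$ inside the rather constrained space $\mathcal{T}^1(f)$ that simultaneously cuts down every maximal component of $Y_j$, and this hinges on the numerical gap $n-1-j>s$ (which, via the characterization of the singular locus, forces each $W_a$ to be proper) together with the fact that an infinite field is not a finite union of its proper subspaces.
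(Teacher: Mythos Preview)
Your argument is correct. Note, however, that the paper does not prove this proposition at all: it is imported wholesale from \cite[Lemme~5.8]{Liu-multiplicity}, so there is no in-paper argument to compare against. Your inductive construction is the natural one, and the decisive step --- finding a single $g_{j+1}\in\mathcal T^1(f)$ that cuts every top-dimensional component $Z_a$ of $Y_j$ --- is precisely where the infinitude of $K$ enters, via the fact that a finite-dimensional vector space over an infinite field is not a finite union of proper subspaces. This is exactly the simplification the authors highlight in \S1.3: over a finite field, as in \cite{Liu-multiplicity}, one may be forced to pass to an extension before such a $g_{j+1}$ exists, whereas over a number field your direct argument suffices.

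One minor remark: you do not really need Krull's principal ideal theorem separately. Since $g_{j+1}$ does not vanish identically on $Z_a$ and $\dim Z_a=n-1-j\geqslant s+1\geqslant 1$, the hypersurface $V(g_{j+1})$ meets $Z_a$ properly in $\mathbb P^n$, and the Serre inequality already pins each component of $Z_a\cap V(g_{j+1})$ to dimension exactly $n-2-j$; the lower-dimensional components of $Y_j$ cannot contribute more, so $\dim Y_{j+1}=n-2-j$.
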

Let $K$ and $f$ be the same as in Proposition \ref{construction of roots}. We denote by
\[X=\proj\left(K[T_0,\ldots,T_n]/(f)\right)\]
in the following argumentation. We denote by $X^{\mathrm{reg}}$ the regular locus of $X$, and by $X^{\mathrm{sing}}$ the singular locus of $X$. Following the notation and conditions in Proposition \ref{construction of roots}, we denote by $X_i$ the hypersurface $V(g_i)$ for simplicity below, where $i=1,\ldots,n-s-1$. By the Jacobian criterion (cf. \cite[Theorem 4.2.19]{LiuQing}), we have $X^\mathrm{sing}\subseteq X\cap X_1\cap\cdots\cap X_{n-s-1}$.

For every integral closed subscheme $M$ of $X$, we denote by $M^{(a)}$ the locus of the points $\xi$ in $M$ whose multiplicities $\mu_{\xi}(X)$ are equal to $\mu_M(X)$, and by $M^{(b)}$ the locus of the points $\xi$ in $M$ whose multiplicities $\mu_{\xi}(X)$ are greater than or equal to $\mu_M(X)+1$. In addition, let $L/K$ be an extension of fields, and we denote by $M^{(a)}(L)$ (\resp $M^{(b)}(L)$) the set of $L$-rational points of $M^{(a)}$ (\resp $M^{(b)}$). With this notation, we have $M(L) = M^{(a)}(L)\bigsqcup M^{(b)}(L)$.

By Proposition \ref{taylor expansion}, we obtain that $M^{(a)}$ is dense in $M$ since $M^\mathrm{reg}$ is dense in $M$ and all of them have multiplicity $\mu_M(X)$. The dimension of $M^{(b)}$ is equal to or smaller than $\dim (M)-1$.

Next, we construct a family of intersection trees $\{\mathscr T_C\}$, where $C\in\mathcal C(X\cdot X_{1,}\intersect X_{n-s-1})$. The root of the intersection tree $\mathscr T_C$ is $C$.

In order to construct those vertices whose depth are equal to or larger than $1$, let $M$ be a vertex which is already constructed in these intersection trees $\{\mathscr T_C\}$. We regard $M$ as an integral closed subscheme of $X$. Next, we consider the set $M(\overline K)$. If $M^{(b)}(\overline K)=\emptyset$, then the vertex $M$ is a leaf in one of these intersection trees.

If $M^{(b)}(\overline K) \neq \emptyset$, let $\xi \in M^{(b)}(\overline K)$, then we have $\mu_M(X) < \mu_{\xi}(X) \leqslant \delta$. By Proposition \ref{taylor expansion}, for a fixed point $\xi'\in M^{(a)}(\overline K)$, we can find some $h \in \mathcal{T}^{\delta-\mu_M(X)}(f)$, such that the hypersurface defined by $h$ does not contain $\xi'$. In this case, the hypersurface $V(h)$ does not contain the generic point of $M$. By comparing the dimensions of $V(h)$ and $M$, we obtain that $V(h)$ intersects $M$ properly. Of course we have $\deg (h)\leqslant\delta-1$. In this case, we define $V(h)$ as the label $\widetilde{M}$ of $M$. The children of $M$ hence are the irreducible components of the intersection $M\cdot\widetilde{M}$. The weights of the edges are the intersection multiplicities respectively.

For the construction that follows, all the mentioned labels are of dimension $n-1$, hence all the vertices in $\mathcal C_w$ are of dimension $s-w$, where $1\leqslant w\leqslant s$ is an integer. The construction terminates in finite steps.

The following lemma is a property of the set $\mathcal Z_*$ (see Definition \ref{def of Z_s}), which will be useful in the proof of Theorem \ref{main theorem}.

\begin{lemm}[Lemme 5.9, \cite{Liu-multiplicity}]\label{control of singular locus}
With all the notation and construction above, for every $\xi\in X^{\mathrm{sing}}(\overline{K})$, there exists at least one $Z\in\mathcal Z_*$ such that $\xi\in Z^{(a)}(\overline K)$.
\end{lemm}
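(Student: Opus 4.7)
The plan is to produce some $Z\in\mathcal Z_*$ with $\xi\in Z^{(a)}(\overline K)$ in two stages: first, locate any vertex $W$ of the trees $\{\mathscr T_C\}_{C\in\mathcal C_0}$ satisfying $\xi\in W^{(a)}(\overline K)$ by descending through a single tree; and then, among all such $W$, choose $Z$ of maximal dimension and verify that the defining condition of $\mathcal Z_*$ holds automatically. The initial reduction is immediate: since $\xi\in X^{\mathrm{sing}}(\overline K)$, the Jacobian criterion gives $\xi\in X\cap X_1\cap\cdots\cap X_{n-s-1}$, so $\xi$ lies in some root $C\in\mathcal C_0$.

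For the first stage I would walk down $\mathscr T_C$: at a vertex $M$ containing $\xi$, if $\mu_\xi(X)=\mu_M(X)$ the descent terminates with $M\in\mathcal V(\xi):=\{W\text{ vertex of some }\mathscr T_{C'}\mid \xi\in W^{(a)}(\overline K)\}$; otherwise $\xi\in M^{(b)}(\overline K)$, so the label $\widetilde M=V(h)$ is defined and Proposition \ref{taylor expansion} applied at $\eta=\xi$ forces $h(\xi)=0$, i.e.\ $\xi\in\widetilde M$. Hence $\xi$ lies in $M\cdot\widetilde M$ and therefore in at least one child of $M$. Since dimensions strictly drop along children, this procedure terminates at a vertex of $\mathcal V(\xi)$; so $\mathcal V(\xi)\ne\emptyset$, and because every vertex of the trees has dimension at most $s$, I may pick $Z\in\mathcal V(\xi)$ of maximal dimension.

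To conclude $Z\in\mathcal Z_*$, suppose $Z\subsetneq Z'$ for some vertex $Z'$ in any tree $\mathscr T_{C'}$. Then $\xi\in Z\subseteq Z'$; moreover $\xi\in(Z')^{(a)}(\overline K)$ is impossible, since it would place $Z'$ in $\mathcal V(\xi)$ with $\dim Z'>\dim Z$, contradicting the choice of $Z$. Hence $\xi\in(Z')^{(b)}(\overline K)$, whence $\mu_Z(X)=\mu_\xi(X)>\mu_{Z'}(X)$; applying Proposition \ref{taylor expansion} at the generic point of $Z$ then gives $Z\subseteq\widetilde{Z'}$, so $Z$ is contained in some irreducible component $Z''$ of the proper intersection $Z'\cdot\widetilde{Z'}$, i.e.\ in some child of $Z'$. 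If $Z=Z''$ then $Z$ itself is a descendant of $Z'$; otherwise $Z\subsetneq Z''$ and the same argument applies to $(Z,Z'')$, and since $\dim Z''=\dim Z'-1$ this iteration terminates after at most $\dim Z'-\dim Z$ steps with $Z$ occurring as a descendant of $Z'$. The main obstacle lies precisely in this last step: the definition of $\mathcal Z_*$ quantifies over arbitrary vertices $Z'$ anywhere in the trees rather than just over ancestors along the descent used in the first stage, and the maximal-dimension choice for $Z$ is the device that forces every such $Z'$ into the descending regime $\xi\in(Z')^{(b)}$, thereby enabling an induction on $\dim Z'-\dim Z$ to locate $Z$ below $Z'$.
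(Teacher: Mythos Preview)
Your argument is correct. The paper itself gives no proof of this lemma: it merely cites \cite[Lemme~5.9]{Liu-multiplicity} and remarks that the argument there only uses perfectness of the base field, so there is nothing to compare against beyond noting that your two-stage approach---descend through a tree to show $\mathcal V(\xi)\ne\emptyset$, then pick $Z\in\mathcal V(\xi)$ of maximal dimension and verify the $\mathcal Z_*$-condition by an induction on $\dim Z'-\dim Z$---is the natural one and is presumably what appears in that reference.
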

\begin{rema}
In the original proof of Lemma \ref{control of singular locus} in \cite[Lemme 5.9]{Liu-multiplicity}, we work over a finite field. In fact this result remains true for the case of a number field, since the proof of \cite[Lemme 5.9]{Liu-multiplicity} only uses the assumption that the base field is perfect.
\end{rema}

\subsection{Counting Multiplicities}\label{counting multiplicity}
With the construction above, we are going to prove the following result.
\begin{theo}\label{main theorem}
  Let $K$ be an arbitrary number field, $n\geqslant2$, $\delta\geqslant1$ and $s\geqslant0$ be three integers. Then the inequality
  \begin{eqnarray*}& &\sum_{\xi\in S(X;D,B)} \mu_{\xi}(X)(\mu_{\xi}(X)-1)^{n-s-1}\\
&\leqslant&\sum_{t=0}^s\max_{Z\in\mathcal Z_t}\left\{\frac{N(Z;D,B)}{\deg(Z)}\right\}\delta(\delta-1)^{n-s+t-1},\end{eqnarray*}
is verified for all reduced hypersurfaces $X$ of $\mathbb P^n_K$ of degree $\delta$, whose dimension of singular locus is $s$. In this inequality, $S(X;D,B)$ is defined in \eqref{S(X;D,B)}, $N(X;D,B)$ is defined in \eqref{N(X;D,B)}, and $\mathcal Z_t$ is defined in Definition \ref{def of Z_s} following the construction in \S \ref{construction of intersection trees}. If $\mathcal Z_t=\emptyset$ for some $0\leqslant t\leqslant s$, we define $\max\limits_{Z\in\mathcal Z_t}\left\{\frac{N(Z;D,B)}{\deg(Z)}\right\}=0$  by convention.
\end{theo}
\begin{proof}
  Suppose that a family of intersection trees $\{\mathscr{T}_C\}$ whose roots are the elements in $\mathcal C(X\cdot X_{1}\intersect X_{n-s-1})$ has already been constructed via procedures introduced in \S \ref{construction of intersection trees}.

First, we have
\begin{equation}\label{first step}
\sum\limits_{\xi\in S(X;D,B)}\mu_\xi(X)(\mu_\xi(X)-1)^{n-s-1}=\sum\limits_{\xi\in S(X^\mathrm{sing};D,B)}\mu_\xi(X)(\mu_\xi(X)-1)^{n-s-1},
\end{equation}
since for every $\xi\in X^\mathrm{reg}$, we always have $\mu_\xi(X)=1$.

By Lemma \ref{control of singular locus}, for each $\xi\in X^{\mathrm{sing}}(\overline K)$, we can find a $Z\in\mathcal Z_*$ such that $\xi\in Z^{(a)}(\overline K)$. So we have
\begin{eqnarray}\label{final 1}
& &\sum\limits_{\xi\in S(X^\mathrm{sing};D,B)}\mu_\xi(X)(\mu_\xi(X)-1)^{n-s-1}\\
&\leqslant&\sum_{t=0}^{s}\sum_{Z\in\mathcal Z_t}\sum_{\xi\in S(Z^{(a)};D,B)}\mu_\xi(X)(\mu_\xi(X)-1)^{n-s-1}.\nonumber
\end{eqnarray}

By Proposition \ref{taylor expansion}, for every $Z\in \mathcal Z_*$, we have the inequality
\[\mu_Z(X)-1\leqslant\mu_Z(X_i)\]
for all $i=1,\ldots,n-s-1$. So we get the inequality
\begin{equation}\label{final_1'}
\mu_Z(X)(\mu_Z(X)-1)^{n-s-1}\leqslant\mu_Z(X)\mu_Z(X_{1})\cdots\mu_Z(X_{n-s-1}).
\end{equation}

 By Proposition \ref{grassmanne} and the inequality \eqref{final_1'}, we have
\begin{eqnarray}\label{final 2}
& &\sum_{Z\in\mathcal Z_t}\mu_Z(X)(\mu_Z(X)-1)^{n-s-1}\deg(Z) \\
& \leqslant & \sum_{Z\in\mathcal Z_t}\mu_Z(X)\mu_Z(X_{1})\cdots\mu_Z( X_{n-s-1})\deg(Z) \nonumber \\
& \leqslant & \deg(X)\prod_{i=1}^{n-s-1}\deg(X_i)\prod_{j=0}^{t-1}\max_{\widetilde{Z}\in \mathcal C'_t}\{\deg(\widetilde{Z})\} \leqslant\delta(\delta-1)^{n-s+t-1}, \nonumber
\end{eqnarray}
for each $t=0,\ldots,s$, since all the labels in $\mathcal C'_*$ are of degree equal to or smaller than $\delta-1$.

Combine the inequalities \eqref{final 1} and \eqref{final 2}, we obtain that
\begin{eqnarray}\label{final step}
  & &\sum_{t=0}^{s}\sum_{Z\in\mathcal Z_t}\sum_{\xi\in S(Z^{(a)};D,B)}\mu_\xi(X)(\mu_\xi(X)-1)^{n-s-1}\\
  &=&\sum_{t=0}^{s}\sum_{Z\in\mathcal Z_t}\mu_Z(X)(\mu_Z(X)-1)^{n-s-1}N(Z^{(a)};D,B)\nonumber\\
  &\leqslant&\sum_{t=0}^{s}\sum_{Z\in\mathcal Z_t}\mu_Z(X)(\mu_Z(X)-1)^{n-s-1}N(Z;D,B)\nonumber\\
  &\leqslant&\sum_{t=0}^{s}\max_{Z\in\mathcal Z_t}\left\{\frac{N(Z;D,B)}{\deg(Z)}\right\}\left(\sum_{Z\in\mathcal Z_t}\mu_Z(X)(\mu_Z(X)-1)^{n-s-1}\deg(Z)\right)\nonumber\\
   &\leqslant&\sum_{t=0}^s\max_{Z\in\mathcal Z_t}\left\{\frac{N(Z;D,B)}{\deg(Z)}\right\}\delta(\delta-1)^{n-s+t-1}.\nonumber
\end{eqnarray}
By the inequalities \eqref{first step}, \eqref{final 1} and \eqref{final step}, we prove the result.
\end{proof}
\begin{rema}\label{choice of counting function}
  We keep all the notation and conditions in Theorem \ref{main theorem}. Let $f:\mathbb N^+\rightarrow \mathbb N$ be an increasing function which is asymptotic to a polynomial whose degree is smaller than $n-s-1$, and it satisfies $f(1)=0$. Then there exists a constant $C_f$ depending only on the function $f$, such that $f(x)\leqslant C_f\cdot x(x-1)^{n-s-1}$ for all $x\geqslant1$. Then the inequality
  \begin{eqnarray*}\sum_{\xi\in S(X;D,B)}f(\mu_\xi(X))&\leqslant& C_f\sum_{\xi\in S(X;D,B)}\mu_\xi(X)(\mu_\xi(X)-1)^{n-s-1}
  \end{eqnarray*}
  is verified for all funtions $f$ satisfying the above conditions, where $D\in\mathbb N^+$ and $B\geqslant1$. If we do not care about the constant depending on the above function $f(x)$, for this kind of counting multiplicities problem, it is enough to consider the counting function $f(x)=x(x-1)^{n-s-1}$ only, which is considered in Theorem \ref{main theorem}.

  If we consider another increasing counting function $g:\mathbb N^+\rightarrow\mathbb N$ which is asymptotic to a polynomial whose degree is smaller than $n-s-1$, and we do not suppose the condition $g(1)=0$ any longer. Then we have
  \begin{eqnarray*}
    \sum_{\xi\in S(X;D,B)}g(\mu_\xi(X))&=&g(1)N(X^{\mathrm{reg}};D,B)+\sum_{\xi\in S(X;D,B)}\left(g(\mu_\xi(X))-g(1)\right)\\
    &\leqslant&g(1)N(X;D,B)+\sum_{\xi\in S(X;D,B)}\left(g(\mu_\xi(X))-g(1)\right).
  \end{eqnarray*}
  We consider the sum
  \[\sum\limits_{\xi\in S(X;D,B)}\left(g(\mu_\xi(X))-g(1)\right)\]
   by the above discussion, and consider the term $g(1)N(X;D,B)$ as the classical problem of counting algebraic points or rational points (when $D=1$). By the fact that $X^\mathrm{reg}$ and $X$ are birational equivalent, this estimate is appropriate.
\end{rema}
\subsection{The case of rational points}
For the sum considered in Theorem \ref{main theorem}, now we consider the case of counting multiplicities of rational points.  If we want a uniform upper bound of it, we have the following result, which is a corollary of Theorem \ref{main theorem} combined with the generalized Schanuel's estimate (Theorem \ref{refined Schanuel estimate for rational points}).
\begin{coro}\label{main corollary}
Let $n\geqslant2$, $\delta\geqslant1$ and $s\geqslant0$ be three integers. Then the estimate
\[\sum_{\xi\in S(X;B)}\mu_\xi(X)(\mu_\xi(X)-1)^{n-s-1}\ll_{n}\delta^{n-s}\max\{\delta-1,B\}^{s+1},\quad B\geqslant 1\]
holds uniformly for reduced hypersurfaces $X$ of $\mathbb P^n_{\Q}$ of degree $\delta$ whose singular locus is of dimension $s$, where $S(X;B)$ is defined in $\eqref{S(X;B)}$.
\end{coro}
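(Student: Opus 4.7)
The plan is to combine Theorem \ref{main theorem} (specialised to $K=\Q$ and $D=1$) with the uniform counting bound of Theorem \ref{refined Schanuel estimate for rational points}. Applying Theorem \ref{main theorem} immediately reduces the desired estimate to bounding, uniformly in $Z\in\mathcal Z_t$, the ratio $N(Z;B)/\deg(Z)$ appearing in
\[
\sum_{\xi\in S(X;B)}\mu_\xi(X)(\mu_\xi(X)-1)^{n-s-1} \leqslant \sum_{t=0}^{s}\max_{Z\in\mathcal Z_t}\left\{\frac{N(Z;B)}{\deg(Z)}\right\}\delta(\delta-1)^{n-s+t-1}.
\]

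By the construction of \S\ref{construction of intersection trees}, every $Z\in\mathcal Z_t$ is an integral closed subscheme of $\mathbb P^n_{\Q}$ of dimension $s-t$. For $t<s$ one has $\dim Z\geqslant 1$, so Theorem \ref{refined Schanuel estimate for rational points} gives $N(Z;B)\ll_n\deg(Z)B^{s-t+1}$, whence $N(Z;B)/\deg(Z)\ll_n B^{s-t+1}$. For $t=s$ the subscheme $Z$ is a single closed point, so $N(Z;B)\leqslant 1\leqslant B^{s-t+1}$ holds trivially because $B\geqslant 1$. Inserting these bounds yields
\[
\sum_{\xi\in S(X;B)}\mu_\xi(X)(\mu_\xi(X)-1)^{n-s-1} \ll_n \sum_{t=0}^{s}\delta(\delta-1)^{n-s+t-1}B^{s-t+1}.
\]

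It then remains to verify, by elementary algebra, the pointwise bound on summands
\[
\delta(\delta-1)^{n-s+t-1}B^{s-t+1}\leqslant\delta^{n-s}\max\{\delta-1,B\}^{s+1},\qquad t=0,\ldots,s,
\]
which I would obtain by splitting into the two regimes $B\geqslant\delta-1$ and $B<\delta-1$. If $B\geqslant\delta-1$, use $(\delta-1)^{n-s+t-1}\leqslant\delta^{n-s-1}(\delta-1)^t$ combined with $(\delta-1)^t B^{s-t+1}\leqslant B^{s+1}$. If $B<\delta-1$, use $B^{s-t+1}\leqslant(\delta-1)^{s-t+1}$ so that the summand is at most $\delta(\delta-1)^{n}$, and then $(\delta-1)^{n-s-1}\leqslant\delta^{n-s-1}$ to match the target bound $\delta^{n-s}(\delta-1)^{s+1}$. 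Summing the $s+1$ such terms and absorbing the factor $s+1$ into the constant (recalling that $s\leqslant n-2$ for a reduced singular hypersurface of $\mathbb P^n_{\Q}$) concludes the proof.

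No real conceptual obstacle arises; the only points worth attention are (i) the trivial handling of the zero-dimensional case $t=s$, for which Theorem \ref{refined Schanuel estimate for rational points} does not directly apply, and (ii) the bookkeeping in the case $B<\delta-1$, where powers of $\delta$ must be traded against powers of $\delta-1$ so as to obtain a single clean bound uniform in both regimes.
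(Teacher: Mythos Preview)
Your proposal is correct and follows the same route as the paper: apply Theorem~\ref{main theorem} with $K=\Q$, $D=1$, bound each $N(Z;B)/\deg(Z)$ by $B^{s-t+1}$ via Theorem~\ref{refined Schanuel estimate for rational points}, and then estimate each resulting summand by $\delta^{n-s}\max\{\delta-1,B\}^{s+1}$, absorbing the factor $s+1\leqslant n-1$ into the implicit constant. Your treatment is in fact slightly more careful than the paper's, which glosses over the zero-dimensional case $t=s$ (where Theorem~\ref{refined Schanuel estimate for rational points} does not literally apply) and the elementary algebra needed to pass to the final bound.
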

\begin{proof}
By the argumentations in Theorem \ref{refined Schanuel estimate for rational points}, we obtain the estimate
\[\frac{N(Z;B)}{\deg(Z)}\ll_{n}B^{\dim(Z)+1},\quad B\geqslant 1\]
holds uniformly for $Z\in\mathcal Z_t$, where $t=0,\ldots,s$ following the construction in \S\ref{construction of intersection trees}. Combine the above inequality with the estimate in Theorem \ref{main theorem} and the fact that $\dim(Z)<n$ and $s<n$, we obtain the result.
\end{proof}

\begin{exem}\label{cylinder}
Let $X'\hookrightarrow\mathbb P^2_\Q$ be a reduced plane curve of degree $\delta$, which is defined by the homogeneous equation $f(T_0,T_1,T_2)=0$. Suppose that $X'$ has a $\Q$-rational point of multiplicity $\delta$. We consider $f$ to be a homogeneous polynomial of degree $\delta$ in $\Q[T_0,\ldots,T_n]$ for an integer $n\geqslant3$. Then $f$ defines a hypersurface in $\mathbb P^n_\Q$, denoted by $X$. Without loss of generality, we suppose that $[1:0:0]$ is the projective coordinate of this singular point of $X'$. Then we have
\begin{eqnarray*}
X^{\mathrm{sing}}(\Q)&=&\{[x_0:\cdots:x_n]\in\mathbb P^n_{\Q}(\Q)|\;x_0=1,x_1=0,x_2=0\}\cup\\
& &\{[x_0:\cdots:x_n]\in\mathbb P^n_{\Q}(\Q)|\;x_0=x_1=x_2=0\},
\end{eqnarray*}
where all singular points of $X$ are of multiplicity $\delta$, and $X^\mathrm{sing}$ is considered to be a reduced closed subscheme of $\mathbb P^n_\Q$. By the equality \eqref{Schanuel over Q}, we have
\begin{equation*}
N(X^\mathrm{sing};B)=N(\mathbb P^{n-2}_\Q;B)=\frac{2^{n-2}}{\zeta(n-1)}B^{n-1}+o(B^{n-1}),\quad B\rightarrow+\infty.
\end{equation*}
Then we have the following asymptotic estimate
\begin{equation*}
\sum\limits_{\xi\in S(X;B)}\mu_\xi(X)(\mu_\xi(X)-1)=\delta(\delta-1)N(X^\mathrm{sing};B)=O_{n}\left(\delta^2B^{n-1}\right)
\end{equation*}
for each hypersurface $X$ satisfying the above conditions.

From this example, the order of $\delta$ in Corollary \ref{main corollary} is optimal when $\dim(X^\mathrm{sing})=n-2$ and $n\geqslant3$. More generally, if $X^\mathrm{sing}$ contains a linear locus of multiplicity $\delta$ in $X$, we can get the maximal order of $\delta$ and $\max\{B,\delta-1\}$ in the estimate of Corollary \ref{main corollary} when $B\geqslant\delta-1$.
\end{exem}

Let $K$ be a number field, and $X\hookrightarrow\mathbb P^n_K$ be a fixed hypersurface of degree $\delta$. To attack this kind of counting multiplicities problem by applying Theorem \ref{main theorem}, the key point is the uniform estimate of the term
\[\max_{Z\in\mathcal Z_t}\left\{\frac{N(Z;D,B)}{\deg(Z)}\right\}\]
for all possible $Z\in\mathcal Z_t$ ($t=1,\ldots,s$) in the intersection trees constructed above. In Theorem \ref{refined Schanuel estimate for rational points}, we give a description of this term for the case of $K=\Q$, and we obtain Corollary \ref{main corollary} through it. If all the irreducible components of $X^{\mathrm{sing}}$ are of degree strictly greater than $1$, we can use the estimates introduced in \S\ref{HB conjecture} to get a better estimate of
\[\sum_{\xi\in S(X;B)}\mu_\xi(X)(\mu_\xi(X)-1)^{n-s-1}\]
than that given in Corollary \ref{main corollary}. The reason is that we have a better estimate of
 \[\max\limits_{Z\in\mathcal Z_t}\left\{\frac{N(Z;B)}{\deg(Z)}\right\}\]
  than that given in Theorem \ref{refined Schanuel estimate for rational points} in this case, where $Z\in\mathcal Z_t$ is defined same as in Theorem \ref{main theorem}.
\begin{exem}\label{example using HB conj}
  Let $\delta\geqslant3$ and $n\geqslant2$ be two integers. Let
  \begin{equation}\label{hypersurface Z}
    Z\hookrightarrow\mathbb P^{n+2}_\Q=\proj\left(\Q[X,Y,T_0,\ldots,T_n]\right)
  \end{equation}
   be the hypersurface defined by the homogeneous polynomial
   \begin{equation}\label{hypersurface Z2}
     F(X,Y,T_0,\ldots,T_n)=Y^\delta+Xf(T_0,T_1,\cdots,T_n),
   \end{equation}
   where $f(T_0,\ldots,T_n)$ is an irreducible homogeneous polynomial of degree $\delta-1$ which defines a smooth hypersurface in $\mathbb P^n_K$, noted by $Z'$ this hypersurface. The polynomial $F(X,Y,T_0,\ldots,T_n)$ is irreducible. By \cite[Exercise 2.4.1]{LiuQing}, we obtain that the hypersurface $Z$ is integral. By \cite[Theorem 1, Corollary]{Browning_Heath06II}, for all integers $\delta\geqslant3$ and $n\geqslant2$, and for any $\epsilon>0$, the estimate
  \[N(Z';B)\ll_{n,\delta,\epsilon}B^{n-1+\epsilon},\quad B\geqslant1\]
holds uniformly for every smooth hypersurface $Z'$.

  Meanwhile, by Jacobian criterion (cf. \cite[Theorem 4.2.19]{LiuQing}), the singular locus of $X$ is defined by
   \[0=F(X,Y,T_0,\ldots,T_n)=\delta Y^{\delta-1}=f(T_0,\ldots,T_n)=X\frac{\partial f}{\partial T_0}=\cdots=X\frac{\partial f}{\partial T_n}.\]
   Because the hypersurface $Z'$ is smooth over $\Q$, the polynomials $f,\frac{\partial f}{\partial T_0},\ldots,\frac{\partial f}{\partial T_n}$ have no common non-zero solutions. Hence for each $\xi\in Z^{\mathrm{sing}}(\overline \Q)$, the projective coordinate $[x:y:t_0:\cdots:t_n]$ of $\xi$ satisfies $x=y=0$ and $[t_0:\cdots:t_n]\in Z'(\overline \Q)$, and every singular points is of multiplicity $2$ in $Z$. By definition, $Z^\mathrm{sing}$ is of codimension $2$ in $Z$, whose dimension is $n-1$. We consider the sum in Theorem \ref{main theorem} for this example, for arbitrary integers $\delta\geqslant3$, $n\geqslant2$, and for all $\epsilon>0$, the equality
   $$\sum_{\xi\in S(Z;B)}\mu_\xi(X)(\mu_\xi(X)-1)^2=2N(Z';B),\quad B\geqslant1$$
is verified for all hypersurfaces defined by the method in \eqref{hypersurface Z}, \eqref{hypersurface Z2}.

We follow the construction in \S\ref{construction of intersection trees}, where the proper intersection of $Z$, $V\left(\frac{\partial F}{\partial X}\right)$ and $V\left(\frac{\partial F}{\partial Y}\right)$ generates the only root of the intersection tree, and it has no descendent. Then we apply Theorem \ref{main theorem} to this case directly, and we obtain the inequality
\[\sum_{\xi\in S(Z;B)}\mu_\xi(X)(\mu_\xi(X)-1)^2\leqslant\delta(\delta-1)^2\frac{N(Z';B)}{\delta-1}=\delta(\delta-1)N(Z';B), \quad B\geqslant1.\]
This is an example which satisfies the upper bound given in Theorem \ref{main theorem}, since $\delta\geqslant3$.

By \cite[Theorem 1, Corollary]{Browning_Heath06II}, the estimate
\[\sum_{\xi\in S(Z;B)}\mu_\xi(X)(\mu_\xi(X)-1)^2=2N(Z';B)\ll_{n,\delta,\epsilon}B^{n-1+\epsilon}, \quad B\geqslant1\]
holds uniformly for all hypersurfaces defined by the method in \eqref{hypersurface Z}, \eqref{hypersurface Z2} and all $\epsilon>0$. In this case, the above estimate gives a better dependance on $B$ than that given in Corollary \ref{main corollary}. But in this estimate, we have no description of the order of $\delta$, since we cannot control the order of $\delta$ in the above estimate to the extent of our current knowledge.
\end{exem}

Similar to \cite[Conjecture 5.13]{Liu-multiplicity}, we propose the following conjecture.
\begin{conj}
Let $K$ be a number field, and $\delta\geqslant1$, $d\geqslant1$, $s\geqslant0$ be three integers. The estimate
\[\sum_{\xi\in S(X;B)}\mu_\xi(X)(\mu_\xi(X)-1)^{d-s}\ll_{n,K}\delta^{d-s+1}B^{s+1}\]
holds uniformly for all reduced pure dimensional closed subschemes $X$ of $\mathbb P^n_K$ of dimension $d$ and degree $\delta$, whose dimension of singular locus is $s$, where $S(X;B)$ is defined in \eqref{S(X;B)}.
\end{conj}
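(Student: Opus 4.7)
The conjecture is the natural extension of Corollary \ref{main corollary} from the hypersurface case $d=n-1$ to pure dimensional closed subschemes of arbitrary codimension $c:=n-d$; when $d=n-1$ both statements coincide. My plan is to mirror the three-step machinery that yielded Corollary \ref{main corollary}: (i) attach to $X$ a family of intersection trees in $\mathbb P^n_K$ whose vertices collectively cover $X^{\mathrm{sing}}$; (ii) use Proposition \ref{grassmanne} at each depth $t$ to bound $\sum_{Z\in\mathcal Z_t}\mu_Z(X)(\mu_Z(X)-1)^{d-s}\deg(Z)$ by a polynomial in $\delta$; (iii) insert a uniform Schanuel-type count $N(Z;B)\ll_{n,K}\deg(Z)\,B^{\dim(Z)+1}$, for which one generalises Theorem \ref{refined Schanuel estimate for rational points} from $\Q$ to an arbitrary number field $K$ by adapting the affine-cone argument of Proposition \ref{affine cone lemma} to the counting of $\mathcal{O}_K$-integral points. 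Elementary bookkeeping then shows that, provided all auxiliary hypersurfaces in the tree have degree $\leqslant c(\delta-1)$ and that $\mu_Z(X_i)\geqslant\mu_Z(X)-1$ holds for each such label, the dominant term (when $B\geqslant c(\delta-1)$) is of order $c^{d-s}\delta^{d-s+1}B^{s+1}$, which is absorbed into $\ll_{n,K}\delta^{d-s+1}B^{s+1}$ since $c\leqslant n$.

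\textbf{Extending the intersection tree.} The technical input to be supplied is a higher-codimension version of the construction in \S\ref{construction of intersection trees}. I would fix a set of generators $f_1,\dots,f_m$ of the homogeneous ideal of $X$ of degree $\leqslant\delta$, which exist once the Castelnuovo--Mumford regularity of $X$ is bounded in terms of $n$ and $\delta$. By the Jacobian criterion (\cite[Theorem 4.2.19]{LiuQing}), the singular locus $X^{\mathrm{sing}}$ is the vanishing locus in $X$ of the $c\times c$ minors of the Jacobian matrix $(\partial f_i/\partial T_j)$, which are homogeneous of degree at most $c(\delta-1)$. In analogy with Proposition \ref{construction of roots}, I would select among these minors a subfamily $g_1,\dots,g_{d-s}$ such that $X\cap V(g_1)\cap\cdots\cap V(g_{d-s})$ has pure dimension $s$ and contains $X^{\mathrm{sing}}$, and take its irreducible components as the roots of the intersection trees. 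Labels at deeper vertices $M$ would be built from higher-order partial derivatives of the $f_i$'s, selected via a Proposition \ref{taylor expansion}-type argument so as to vanish at any $\xi\in M$ of multiplicity $>\mu_M(X)$ in $X$, but not at a fixed generic point of $M^{(a)}$.

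\textbf{Main obstacle.} The crux is establishing the higher-codimension analog of Proposition \ref{taylor expansion}: namely, that the hypersurface $V(g)$ labelling a vertex $M$ actually satisfies $\mu_Z(V(g))\geqslant\mu_Z(X)-1$ for every descendant $Z$ of $M$. For a hypersurface $X=V(f)$ this is immediate from Taylor's formula applied to $f$ at the generic point of $Z$. For $X$ of higher codimension, $\mu_Z(X)$ is the Hilbert--Samuel multiplicity of $\mathcal{O}_{X,\xi_Z}$, and is in general not the order of vanishing at $\xi_Z$ of any single polynomial: already for a complete intersection, $\mu_Z(X)$ can grow multiplicatively in the orders $\mu_Z(V(f_i))$, whereas derivatives of individual $f_i$'s lose only one order at a time. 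One must therefore work in the associated graded ring $\mathrm{gr}_{\sm_{\xi_Z}}\mathcal{O}_{\mathbb P^n_K,\xi_Z}$ with the initial forms of the $f_i$'s and construct, via a local-algebra argument, hypersurfaces of controlled global degree whose multiplicity at $\xi_Z$ is at least $\mu_Z(X)-1$. Finding such a construction, together with the verification that it can be globalised to produce a uniform family of labels, is presumably the principal difficulty of the conjecture.
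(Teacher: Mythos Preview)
The statement is presented in the paper as a \emph{conjecture}, not a theorem; the authors give no proof and explicitly frame it as the number-field analogue of the open \cite[Conjecture 5.13]{Liu-multiplicity}. There is therefore no paper argument to compare against, and your proposal should be read as a sketch toward an open problem rather than as an alternative proof.

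You have correctly located the principal obstruction. In codimension $c>1$ there is no analogue of Proposition \ref{taylor expansion}: the Hilbert--Samuel multiplicity $\mu_Z(X)$ is not the vanishing order of any single polynomial, and neither Jacobian minors nor derivatives of individual generators need satisfy $\mu_Z(V(g))\geqslant\mu_Z(X)-1$. Already for a complete intersection $X=V(f_1,\dots,f_c)$ with $\mu_Z(V(f_i))=m_i$ one may have $\mu_Z(X)=\prod_i m_i$, whereas the only lower bound one gets for the multiplicity of a $c\times c$ Jacobian minor at $Z$ is $\sum_i(m_i-1)$, which falls far short of $\prod_i m_i-1$; so the inequality you need for step (ii) can fail badly. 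There is also a second gap you pass over too quickly: you claim generators $f_1,\dots,f_m$ of degree $\leqslant\delta$ exist once the Castelnuovo--Mumford regularity of $X$ is bounded in terms of $n$ and $\delta$, but a regularity bound only controls generator degrees by the regularity itself, not by $\delta$. Uniform regularity bounds for reduced subschemes of degree $\delta$ in $\mathbb P^n$ are far from linear in $\delta$ (and the Eisenbud--Goto bound $\delta-c+1$ for the irreducible nondegenerate case is now known to fail in general), so your auxiliary hypersurfaces may have degree of order $\delta^{a(n)}$ rather than $O_n(\delta)$; feeding this into Proposition \ref{grassmanne} would produce an exponent of $\delta$ growing with $n$, not the conjectured $d-s+1$. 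Your outline is thus an honest reconnaissance that correctly flags the main difficulty, but it is not a proof, and at least one further obstacle beyond the one you highlight would have to be overcome.
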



\backmatter

\bibliography{wen}

\def\cprime{$'$} \def\cprime{$'$}
\providecommand{\bysame}{\leavevmode ---\ }
\providecommand{\og}{``}
\providecommand{\fg}{''}
\providecommand{\smfandname}{\&}
\providecommand{\smfedsname}{\'eds.}
\providecommand{\smfedname}{\'ed.}
\providecommand{\smfmastersthesisname}{M\'emoire}
\providecommand{\smfphdthesisname}{Th\`ese}
\begin{thebibliography}{10}

\bibitem{Browning_Heath05}
{\scshape T.~D. Browning {\normalfont \smfandname} D.~R. Heath-Brown} -- {\og
  Counting rational points on hypersurfaces\fg}, \emph{Journal f\"ur die Reine
  und Angewandte Mathematik} \textbf{584} (2005), p.~83--115.

\bibitem{Browning_Heath06I}
\bysame , {\og The density of rational points on non-singular hypersurfaces.
  {I}\fg}, \emph{The Bulletin of the London Mathematical Society} \textbf{38}
  (2006), no.~3, p.~401--410.

\bibitem{Browning_Heath06II}
\bysame , {\og The density of rational points on non-singular hypersurfaces.
  {II}\fg}, \emph{Proceedings of the London Mathematical Society. Third Series}
  \textbf{93} (2006), no.~2, p.~273--303, With an appendix by J. M. Starr.

\bibitem{Bro_HeathB_Salb}
{\scshape T.~D. Browning, D.~R. Heath-Brown {\normalfont \smfandname}
  P.~Salberger} -- {\og Counting rational points on algebraic varieties\fg},
  \emph{Duke Mathematical Journal} \textbf{132} (2006), no.~3, p.~545--578.

\bibitem{Browning-PM277}
{\scshape T.~D. Browning} -- \emph{Quantitative arithmetic of projective
  varieties}, Progress in Mathematics, vol. 277, Birkh\"auser Verlag, Basel,
  2009.

\bibitem{Joins}
{\scshape H.~Flenner, L.~O'Carroll {\normalfont \smfandname} W.~Vogel} --
  \emph{Joins and intersections}, Springer Monographs in Mathematics,
  Springer-Verlag, Berlin, 1999.

\bibitem{Fulton2}
{\scshape W.~Fulton} -- \emph{Algebraic curves. {A}n introduction to algebraic
  geometry}, W. A. Benjamin, Inc., New York-Amsterdam, 1969, Notes written with
  the collaboration of Richard Weiss, Mathematics Lecture Notes Series.

\bibitem{Fulton}
\bysame , \emph{Intersection theory}, second \smfedname, Ergebnisse der
  Mathematik und ihrer Grenzgebiete. 3. Folge. A Series of Modern Surveys in
  Mathematics [Results in Mathematics and Related Areas. 3rd Series. A Series
  of Modern Surveys in Mathematics], vol.~2, Springer-Verlag, Berlin, 1998.

\bibitem{GaoXiaThesis}
{\scshape X.~Gao} -- \emph{On {N}orthcott's theorem}, ProQuest LLC, Ann Arbor,
  MI, 1995, Thesis (Ph.D.)--University of Colorado at Boulder.

\bibitem{EGAII}
{\scshape A.~Grothendieck} -- {\og \'{E}l\'ements de g\'eom\'etrie
  alg\'ebrique. {II}. \'{E}tude globale \'el\'ementaire de quelques classes de
  morphismes\fg}, \emph{Institut des Hautes \'Etudes Scientifiques.
  Publications Math\'ematiques} (1961), no.~8, p.~222.

\bibitem{Guignard2017}
{\scshape Q.~Guignard} -- {\og Counting algebraic points of bounded height on
  projective spaces\fg}, \emph{Journal of Number Theory} \textbf{170} (2017),
  p.~103--141.

\bibitem{GTM52}
{\scshape R.~Hartshorne} -- \emph{Algebraic geometry}, Springer-Verlag, New
  York, 1977, Graduate Texts in Mathematics, No. 52.

\bibitem{Heath-Brown}
{\scshape D.~R. Heath-Brown} -- {\og The density of rational points on curves
  and surfaces\fg}, \emph{Annals of Mathematics. Second Series} \textbf{155}
  (2002), no.~2, p.~553--595.

\bibitem{Hindry}
{\scshape M.~Hindry {\normalfont \smfandname} J.~H. Silverman} --
  \emph{Diophantine geometry, an introduction}, Graduate Texts in Mathematics,
  vol. 201, Springer-Verlag, New York, 2000.

\bibitem{Laumon1975}
{\scshape G.~Laumon} -- {\og Degr\'e de la vari\'et\'e duale d'une hypersurface
  \`a singularit\'es isol\'ees\fg}, \emph{Bulletin de la Soci\'et\'e
  Math\'ematique de France} \textbf{104} (1976), no.~1, p.~51--63.

\bibitem{LeRudulier2014}
{\scshape C.~Le~Rudulier} -- {\og Points alg\'ebriques de hauteur born\'ee sur
  la droite projective\fg}, \emph{J. Th\'eor. Nombres Bordeaux} \textbf{26}
  (2014), no.~3, p.~789--813.

\bibitem{Liu-multiplicity}
{\scshape C.~Liu} -- {\og Comptage des multiplicit\'es dans une hypersurface
  sur un corps fini\fg}, \emph{\url{arxiv:1606.09337}} (2016).

\bibitem{LiuQing}
{\scshape Q.~Liu} -- \emph{Algebraic geometry and arithmetic curves}, Oxford
  Graduate Texts in Mathematics, vol.~6, Oxford University Press, Oxford, 2002,
  Translated from the French by Reinie Ern{\'e}, Oxford Science Publications.

\bibitem{MasserVaaler2007}
{\scshape D.~Masser {\normalfont \smfandname} J.~D. Vaaler} -- {\og Counting
  algebraic numbers with large height. {II}\fg}, \emph{Transactions of the
  American Mathematical Society} \textbf{359} (2007), no.~1, p.~427--445.

\bibitem{Neukirch}
{\scshape J.~Neukirch} -- \emph{Algebraic number theory}, Grundlehren der
  Mathematischen Wissenschaften [Fundamental Principles of Mathematical
  Sciences], vol. 322, Springer-Verlag, Berlin, 1999, Translated from the 1992
  German original and with a note by Norbert Schappacher, With a foreword by G.
  Harder.

\bibitem{Salberger07}
{\scshape P.~Salberger} -- {\og On the density of rational and integral points
  on algebraic varieties\fg}, \emph{Journal f\"ur die Reine und Angewandte
  Mathematik} \textbf{606} (2007), p.~123--147.

\bibitem{Schanuel}
{\scshape S.~H. Schanuel} -- {\og Heights in number fields\fg}, \emph{Bulletin
  de la Soci\'et\'e Math\'ematique de France} \textbf{107} (1979), no.~4,
  p.~433--449.

\bibitem{Schmidt1995}
{\scshape W.~M. Schmidt} -- {\og Northcott's theorem on heights. {II}. {T}he
  quadratic case\fg}, \emph{Acta Arithmetica} \textbf{70} (1995), no.~4,
  p.~343--375.

\bibitem{SerreLocAlg}
{\scshape J.-P. Serre} -- \emph{Local algebra}, Springer Monographs in
  Mathematics, Springer-Verlag, Berlin, 2000, Translated from the French by
  CheeWhye Chin and revised by the author.

\end{thebibliography}
\bibliographystyle{smfplain}

\end{document}